\theoremstyle{plain}
\newtheorem{thm}{Theorem}[section]
\newtheorem{lem}[thm]{Lemma}
\newtheorem{cor}[thm]{Corollary}
\newtheorem{prop}[thm]{Proposition}
\theoremstyle{definition}
\newtheorem{dfn}[thm]{Definition}
\theoremstyle{remark}
\begin{document}
\title{The Countable Chain Condition for C*-Algebras}
\author[S.~Masumoto]{Shuhei MASUMOTO}
\address[S.~Masumoto]{Graduate~School~of~Mathematical~Sciences, the~University~of~Tokyo}
\email{masumoto@ms.u-tokyo.ac.jp}

\keywords{Countable chain condition; C*-algebras; tensor products}
\subjclass[2010]{Primary 47L30; Secondary 03E35, 54A35}

\begin{abstract}
In this paper, we introduce the countable chain condition for C*-algebras and 
study its fundamental properties.  
We show independence from $\mathsf{ZFC}$ of the statement that 
this condition is preserved under the tensor products of C*-algebras.  
\end{abstract}

\maketitle

\section{Introduction}\label{sec:intro}
A topological space is said to have \emph{the countable chain condition} (\emph{CCC} for short)
if every family of mutually disjoint nonempty open subsets is countable.  
Any separable space clearly has CCC.  
Conversely, every metric space which has CCC is separable.  

The relation between separability and direct products is simple.  
The direct product of a family of separable spaces are separable 
when its cardinality is less than or equal to $2^\omega$; 
but if the cardinality of the family is greater than $2^\omega$, 
then its direct product can be nonseparable.  
On this point, CCC behaves differently: 
it is irrelevant to the cardinality of the family.  
It is known that the direct product of a family of CCC spaces has CCC 
if for every finite subfamily, its direct product has CCC; 
however, the statement that the direct product of two CCC spaces has CCC 
cannot be proved or disproved in $\mathsf{ZFC}$ \cite[Theorem II.2.24 and Lemma II.4.3]{Kunen}.  

Now we shall restrict our interest to locally compact Hausdorff spaces.  
Let $X$ be a locally compact Hausdorff space and 
$C_0(X)$ be the C*-algebra of the continuous functions on $X$ which vanish at infinity.  
In view of the Gelfand-Naimark theorem, 
$C_0(X)$ contains all the information about the topological structure of $X$.  
In particular, there is a canonical one to one correspondence between 
the open sets of $X$ and the closed ideals of $C_0(X)$, 
and CCC can be reformulated as a condition on the ideal structure of $C_0(X)$,  
whence this condition can be generalized for noncommutative C*-algebras.  
Moreover, since $C_0(X \times Y)$ is canonically isomorphic to $C_0(X) \otimes C_0(Y)$, 
the discussion on the relation between CCC and direct products yields information 
about the ideal structure of tensor products of C*-algebras.  
In this way, we prove the following theorems in this paper: 

\begin{thm}\label{thm:main1}
The minimal tensor product of a family of unital CCC C*-algebras has CCC 
if for every finite subfamily, its minimal tensor product has CCC. 
\end{thm}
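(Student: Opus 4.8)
The plan is to transplant the classical topological argument — the $\Delta$-system (sunflower) argument that the product of CCC spaces all of whose finite subproducts are CCC is again CCC — into the C*-setting, using the dictionary: ``nonempty open set'' $\leftrightarrow$ ``nonzero closed ideal''; ``basic open set with finite support'' $\leftrightarrow$ ``nonzero positive element lying in the ideal and in a finite-stage factor $A_F$''; ``projection onto the coordinates in the root $R$'' $\leftrightarrow$ ``slice map $\mathrm{id}_{A_R}\otimes\psi$''. Throughout, recall that a C*-algebra has CCC exactly when it admits no uncountable family of nonzero closed ideals with pairwise zero intersections.

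Set $A:=\bigotimes_{i\in I}^{\min}A_i=\varinjlim_{F}A_F$, where $F$ ranges over finite subsets of $I$, $A_F:=\bigotimes_{i\in F}^{\min}A_i$, and the connecting maps $A_F\hookrightarrow A_{F'}$ $(F\subseteq F')$ are the (isometric, unital) inclusions $x\mapsto x\otimes1$; so each $A_F$ is a unital C*-subalgebra of $A$ and $\bigcup_F A_F$ is dense (this is where unitality is used). Assume for contradiction that $A$ fails CCC and fix an uncountable family $\{J_\alpha:\alpha<\omega_1\}$ of nonzero closed ideals with $J_\alpha\cap J_\beta=0$ for $\alpha\neq\beta$. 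For each $\alpha$ choose a positive $a_\alpha\in J_\alpha$ with $\|a_\alpha\|=1$, and by density a positive $b_\alpha$ in some $A_{F_\alpha}$ $(F_\alpha\subseteq I$ finite$)$ with $\|a_\alpha-b_\alpha\|<1/4$; then $\|b_\alpha\|>3/4$, so $d_\alpha:=(b_\alpha-1/4)_+$ is a nonzero positive element of $A_{F_\alpha}$, and by the standard perturbation lemma for positive elements (of the kind used in Cuntz-semigroup theory) $d_\alpha$ lies in the ideal generated by $a_\alpha$, hence $d_\alpha\in A_{F_\alpha}\cap J_\alpha$.

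Apply the $\Delta$-system lemma \cite{Kunen} to $\{F_\alpha:\alpha<\omega_1\}$: there are an uncountable $S\subseteq\omega_1$ and a finite $R\subseteq I$ with $F_\alpha\cap F_\beta=R$ for distinct $\alpha,\beta\in S$; put $G_\alpha:=F_\alpha\setminus R$, so the sets $R$ and $G_\alpha$ $(\alpha\in S)$ are pairwise disjoint and moreover $G_\alpha\cap F_\beta=\emptyset$ whenever $\alpha\neq\beta$. For $\alpha\in S$ regard $d_\alpha\in A_{F_\alpha}=A_R\otimes_{\min}A_{G_\alpha}$. Here is the one genuinely non-topological point: a slice map $\mathrm{id}_{A_R}\otimes\psi$ can annihilate $d_\alpha$ for a badly chosen state $\psi$ on $A_{G_\alpha}$. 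This is repaired by noting that we may choose $\psi$ depending on $\alpha$ (the $G_\alpha$ are pairwise disjoint and disjoint from every other $F_\beta$, so the choices do not interfere), together with the elementary fact that product states separate points of a minimal tensor product (immediate from vector states of product vectors in a faithful representation): since $d_\alpha\neq0$ there is a product state $\phi\otimes\psi^\alpha$ on $A_R\otimes_{\min}A_{G_\alpha}$ not vanishing on $d_\alpha$, so $e_\alpha:=(\mathrm{id}_{A_R}\otimes\psi^\alpha)(d_\alpha)$ is a nonzero positive element of $A_R$. Let $L_\alpha:=\overline{A_R e_\alpha A_R}$. Since $A_R$, being the minimal tensor product of the finite subfamily indexed by $R$, has CCC, the uncountable family $\{L_\alpha:\alpha\in S\}$ cannot be pairwise disjoint, so there are $\alpha\neq\beta$ in $S$ with $L_\alpha\cap L_\beta\neq0$, equivalently $e_\alpha y e_\beta\neq0$ for some $y\in A_R$.

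Finally I would lift this collision back to $A$, just as a point of $\pi_R(U_\alpha)\cap\pi_R(U_\beta)$ is lifted to a point of $U_\alpha\cap U_\beta$. Work in $A_F$ with $F:=F_\alpha\cup F_\beta=R\sqcup G_\alpha\sqcup G_\beta$, so $A_F=A_R\otimes_{\min}A_{G_\alpha}\otimes_{\min}A_{G_\beta}$, with $d_\alpha$ supported on $R\sqcup G_\alpha$ (i.e.\ $\otimes\,1$ on $G_\beta$), $d_\beta$ supported on $R\sqcup G_\beta$, and put $\hat y:=y\otimes1\otimes1\in A_F\subseteq A$. A short computation with the slice map $\Theta:=\mathrm{id}_{A_R}\otimes\psi^\alpha\otimes\psi^\beta$ — using that $\Theta$ is a conditional expectation onto $A_R$ that is $A_R$-bimodular, and that it fixes $d_\alpha$ on the $G_\beta$-slice and $d_\beta$ on the $G_\alpha$-slice — yields $\Theta(d_\alpha\,\hat y\,d_\beta)=e_\alpha y e_\beta\neq0$, whence $d_\alpha\,\hat y\,d_\beta\neq0$. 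But $d_\alpha\in J_\alpha$ and $d_\beta\in J_\beta$ sit in ideals of $A$, so $d_\alpha\,\hat y\,d_\beta\in J_\alpha\cap J_\beta=0$, a contradiction. I expect the routine work to lie in the slice-map bookkeeping and in citing the right perturbation lemma for $d_\alpha\in J_\alpha$; the conceptual heart of the argument, and the spot where the statement could a priori fail, is the use of the per-coordinate freedom in choosing $\psi^\alpha$ together with ``product states separate points'' to make slice maps play the role of the coordinate projections.
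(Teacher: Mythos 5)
Your proof is correct, and it rests on the same combinatorial skeleton as the paper's argument: localize each ideal to a finite-stage subalgebra $A_{F_\alpha}$, apply the $\Delta$-system lemma, and derive an uncountable family of non-orthogonal ideals in the root algebra $A_R$, contradicting the hypothesis on finite subfamilies. The two places where you diverge are in the operator-algebraic execution. First, the paper produces a nonzero element of $\mathcal{I}_\lambda \cap A_{F_\lambda}$ from Lemma \ref{lem:union_lemma} (the inductive-limit lemma for ideals), whereas you use the R{\o}rdam-type perturbation $(b-\varepsilon)_+ = d a d^*$; both work, and yours has the minor advantage of producing a positive element outright. Second, the paper pushes its element into the algebraic tensor product $\bigodot_{i \in J_\lambda} \mathcal{A}_i$, writes it as $\sum_k g_\lambda^k \otimes h_\lambda^k$ with the $h_\lambda^k$ linearly independent, and extracts the relations $g_\lambda^k a g_\mu^l = 0$; your slice maps $\mathrm{id}_{A_R}\otimes\psi^\alpha$, with the state chosen per $\alpha$ so as not to annihilate $d_\alpha$, accomplish the same projection onto $A_R$ without ever leaving the C*-completion, which sidesteps the (true but not entirely obvious) claim that a nonzero closed ideal of a minimal tensor product must meet the algebraic tensor product. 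The bimodularity computation $\Theta(d_\alpha \hat{y} d_\beta) = e_\alpha y e_\beta$ is sound, and your setup even absorbs the case $R = \varnothing$ (with $A_\varnothing = \mathbb{C}1$) that the paper handles separately.
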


\begin{thm}\label{thm:main2}
Martin's Axiom, $\mathsf{MA}(\omega_1)$, 
implies that any minimal tensor product of unital CCC C*-algebras has CCC.  
\end{thm}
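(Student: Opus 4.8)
The plan is to reduce to two tensor factors and then run a forcing argument dual to the classical proof that $\mathsf{MA}(\omega_1)$ makes CCC productive. First, by Theorem~\ref{thm:main1} it is enough to know that every \emph{finite} minimal tensor product of unital CCC C*-algebras has CCC; since a minimal tensor product of unital algebras is unital and $A_{1}\otimes_{\min}\cdots\otimes_{\min}A_{n}\cong(A_{1}\otimes_{\min}\cdots\otimes_{\min}A_{n-1})\otimes_{\min}A_{n}$, an induction on $n$ (the inductive step applying the two-factor case to the $(n-1)$-fold product, unital and CCC by hypothesis, and $A_{n}$) reduces everything to the following, still assumed under $\mathsf{MA}(\omega_1)$: \emph{if $A$ and $B$ are unital CCC C*-algebras, then $A\otimes_{\min}B$ has CCC.}

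The C*-algebraic heart of the matter, which I would isolate as a lemma, is a structural fact about ideals of minimal tensor products: \emph{every nonzero closed ideal $I$ of $A\otimes_{\min}B$ contains a nonzero ideal of the form $J\otimes_{\min}K$ for some nonzero closed ideals $J\trianglelefteq A$, $K\trianglelefteq B$} (here $J\otimes_{\min}K$ is regarded, via injectivity of the minimal norm, as a closed ideal of $A\otimes_{\min}B$). Equivalently, $I$ should contain a nonzero elementary tensor $a\otimes b$ with $a\in A_{+}$, $b\in B_{+}$, since the closed ideal of $A\otimes_{\min}B$ generated by such an $a\otimes b$ is exactly $\overline{AaA}\otimes_{\min}\overline{BbB}$. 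To prove this I would take $0\neq c\in I_{+}$; if $I$ meets $A\otimes 1$ or $1\otimes B$ nontrivially we are done at once, so assume it does not. Product pure states separate the positive cone of $A\otimes_{\min}B$ — since product vectors are total in $H_{\sigma}\otimes H_{\tau}$, a nonzero positive operator there cannot be annihilated by all of them — so one can fix irreducible representations $\sigma$ of $A$, $\tau$ of $B$ and unit vectors $\xi,\eta$ with $\langle(\sigma\otimes\tau)(c)(\xi\otimes\eta),\xi\otimes\eta\rangle>0$, and then ``localize at this point'' to manufacture the required elementary tensor inside the closed ideal generated by $c$. I expect this to be the main obstacle: it is easy to produce an elementary tensor lying, together with $c$, in the \emph{kernel of $\sigma\otimes\tau$}, but getting one genuinely inside $\overline{(A\otimes_{\min}B)\,c\,(A\otimes_{\min}B)}\subseteq I$ requires the more delicate analysis of the ideal lattice of the minimal tensor product that the preliminary part of the paper presumably carries out.

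Granting the lemma, I would finish as follows. For a C*-algebra $D$ let $\mathbb{P}_{D}$ be the poset of nonzero closed ideals of $D$ ordered by inclusion; a family of closed ideals is pairwise orthogonal exactly when it is an antichain of $\mathbb{P}_{D}$ (two closed ideals have zero product iff they have zero intersection iff they admit no common nonzero subideal), so $D$ has CCC iff $\mathbb{P}_{D}$ is ccc. Suppose $A\otimes_{\min}B$ failed CCC, and fix pairwise orthogonal nonzero closed ideals $\{I_{\alpha}:\alpha<\omega_{1}\}$. Using the lemma, choose for each $\alpha$ nonzero closed ideals $J_{\alpha}\trianglelefteq A$, $K_{\alpha}\trianglelefteq B$ with $J_{\alpha}\otimes_{\min}K_{\alpha}\subseteq I_{\alpha}$. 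If for some $\alpha\neq\beta$ we had both $J_{\alpha}\cap J_{\beta}\neq 0$ and $K_{\alpha}\cap K_{\beta}\neq 0$, then picking $0\neq x\in J_{\alpha}\cap J_{\beta}$ and $0\neq y\in K_{\alpha}\cap K_{\beta}$ would give $0\neq x\otimes y\in(J_{\alpha}\otimes_{\min}K_{\alpha})\cap(J_{\beta}\otimes_{\min}K_{\beta})\subseteq I_{\alpha}\cap I_{\beta}$, contradicting orthogonality. Hence for all $\alpha\neq\beta$ either $J_{\alpha}\cap J_{\beta}=0$ or $K_{\alpha}\cap K_{\beta}=0$, that is, $\{(J_{\alpha},K_{\alpha}):\alpha<\omega_{1}\}$ is an uncountable antichain in the product poset $\mathbb{P}_{A}\times\mathbb{P}_{B}$. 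But $\mathbb{P}_{A}$ and $\mathbb{P}_{B}$ are ccc because $A$ and $B$ have CCC, and under $\mathsf{MA}(\omega_{1})$ every ccc poset has precaliber $\aleph_{1}$, so in particular the product of two ccc posets is ccc \cite{Kunen} — a contradiction. Thus $A\otimes_{\min}B$ has CCC, and with the first paragraph this proves the theorem.
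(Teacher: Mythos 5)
Your proposal is correct and follows essentially the same route as the paper: reduce to two factors via Theorem~\ref{thm:main1}, place a nonzero $\mathcal{J}_\alpha \odot \mathcal{K}_\alpha$ inside each $\mathcal{I}_\alpha$, and then use the $\mathsf{MA}(\omega_1)$ productivity of ccc posets of ideals. The structural lemma you flag as the main obstacle is exactly \cite[Lemma 2.12 (ii)]{Blanchard-Kirchberg}, which the paper cites rather than proves, and your appeal to precaliber $\aleph_1$ stands in for the paper's Lemma~\ref{lem:FIP}, which the paper itself notes is a corollary of \cite[Lemma II.2.23]{Kunen}.
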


The precise definition of CCC for C*-algebras is introduced in Section \ref{sec:dfn}.  
In Section \ref{sec:MA} Martin's Axiom, 
which is known to be independent from $\mathsf{ZFC}$, is explained.  
Here it is also verified that the negation of the Suslin Hypothesis, 
which is another independent statement explained in Section \ref{sec:MA}, 
implies the opposite conclusion of Theorem \ref{thm:main2}.  
We prove Theorems \ref{thm:main1} and \ref{thm:main2} in Section \ref{sec:tensor}.  
Combining this fact with Theorem \ref{thm:main1}, 
we conclude that the statement that tensor products of CCC C*-algebras has CCC is 
independent from $\mathsf{ZFC}$.

\section{A definition of CCC for C*-algebras}\label{sec:dfn}
\begin{dfn}\label{dfn:CCC_C*-algebra}
Two nonzero ideals in a C*-algebra are said to be \emph{orthogonal} 
if their intersection is the zero ideal.  
A C*-algebra has the \emph{countable chain condition} (CCC)
if any family of nonzero mutually orthogonal ideals is countable.  
\end{dfn}

Note that if $\mathcal{I}, \mathcal{J}$ are ideals in a C*-algebra, then 
$\mathcal{I} \cap \mathcal{J}$ coincides with $\overline{\mathcal{I}\mathcal{J}}$, 
whence they are orthogonal if and only if $\mathcal{I}\mathcal{J} = 0$.  

We shall begin with verifying that this definition is a generalization of CCC for topological spaces.  
Recall that a topological space has CCC if any family of nonempty mutually disjoint open subsets 
is countable.  

\begin{prop}\label{prop:CCC_equiv_CCC}
Let $X$ be a locally compact Hausdorff space.  
Then $C_0(X)$ has CCC as a C*-algebra if and only if $X$ has CCC as a topological space.  
\end{prop}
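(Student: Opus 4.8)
The plan is to invoke the Gelfand--Naimark correspondence between the open subsets of $X$ and the closed ideals of $C_0(X)$, and to check that under this correspondence the topological notion of disjointness translates precisely into the algebraic notion of orthogonality from Definition \ref{dfn:CCC_C*-algebra}. (Throughout, ``ideal'' means closed two-sided ideal, as in Definition \ref{dfn:CCC_C*-algebra}, so the commutative Gelfand--Naimark picture applies verbatim.)

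First I would recall the correspondence in the form I need. For an open set $U \subseteq X$, put $\mathcal{I}_U = \{f \in C_0(X) : f|_{X \setminus U} = 0\}$; this is a closed ideal, naturally identified with $C_0(U)$. Conversely, for a closed ideal $\mathcal{I} \subseteq C_0(X)$, put $U_{\mathcal{I}} = \{x \in X : f(x) \neq 0 \text{ for some } f \in \mathcal{I}\}$, an open subset of $X$. The content of the commutative Gelfand--Naimark theorem, together with a Urysohn/partition-of-unity argument showing every closed ideal has the form $\mathcal{I}_U$, is that $U \mapsto \mathcal{I}_U$ and $\mathcal{I} \mapsto U_{\mathcal{I}}$ are mutually inverse, inclusion-preserving bijections between the open subsets of $X$ and the closed ideals of $C_0(X)$.

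Next I would record the two translations that are actually needed. Since $\mathcal{I}_\emptyset = \{0\}$ while, for $U \neq \emptyset$, local compactness and Urysohn's lemma furnish a nonzero compactly supported function lying in $\mathcal{I}_U$, we have that $\mathcal{I}_U$ is nonzero if and only if $U$ is nonempty. Moreover $\mathcal{I}_{U \cap V} = \mathcal{I}_U \cap \mathcal{I}_V$, because a function vanishes off $U \cap V$ exactly when it vanishes off $U$ and off $V$; combining this with the previous sentence shows that $\mathcal{I}_U$ and $\mathcal{I}_V$ are orthogonal (i.e.\ $\mathcal{I}_U \cap \mathcal{I}_V = \{0\}$) if and only if $U \cap V = \emptyset$.

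Finally I would assemble these facts. Given a family $(\mathcal{I}_\lambda)_{\lambda \in \Lambda}$ of nonzero, pairwise orthogonal closed ideals of $C_0(X)$, the open sets $(U_{\mathcal{I}_\lambda})_{\lambda \in \Lambda}$ are nonempty, pairwise disjoint, and (since $\mathcal{I} \mapsto U_{\mathcal{I}}$ is a bijection) distinct, so they form a family of the same cardinality; conversely, a family $(U_\lambda)_{\lambda \in \Lambda}$ of nonempty pairwise disjoint open sets yields the family $(\mathcal{I}_{U_\lambda})_{\lambda \in \Lambda}$ of nonzero pairwise orthogonal ideals of the same cardinality. Hence every family of nonzero pairwise orthogonal ideals in $C_0(X)$ is countable if and only if every family of nonempty pairwise disjoint open sets in $X$ is countable, which is exactly the statement that $C_0(X)$ has CCC as a C*-algebra if and only if $X$ has CCC as a topological space. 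I do not anticipate a serious obstacle here; the only points requiring a little care are the surjectivity of $U \mapsto \mathcal{I}_U$ onto the closed ideals (there are no ``exotic'' closed ideals of $C_0(X)$) and the harmless-but-essential observation that the ideals in Definition \ref{dfn:CCC_C*-algebra} are the closed ones.
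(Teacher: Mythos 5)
Your proof is correct, but it routes through a heavier tool than the paper does. You establish the full bijective correspondence $U \mapsto \mathcal{I}_U$ between open subsets of $X$ and closed ideals of $C_0(X)$ --- in particular the surjectivity statement that every closed ideal of $C_0(X)$ has the form $\mathcal{I}_U$, which is a genuine (if standard) structure theorem requiring a Urysohn/partition-of-unity or Stone--Weierstrass argument --- and then translate orthogonality into disjointness through that bijection. The paper avoids the classification of ideals entirely: for the forward direction it simply picks one norm-one function $f_\lambda$ in each ideal $\mathcal{I}_\lambda$ and sets $U_\lambda = \{x : |f_\lambda(x)| > 1/2\}$; disjointness of the $U_\lambda$ follows from $f_\lambda f_\mu \in \mathcal{I}_\lambda\mathcal{I}_\mu \subseteq \mathcal{I}_\lambda \cap \mathcal{I}_\mu = 0$, with no need to know what $\mathcal{I}_\lambda$ looks like globally. (The backward direction, sending disjoint open sets to the ideals $C_0(U_\lambda)$, is the same in both arguments and only needs the easy, injective half of your correspondence.) What your approach buys is a cleaner conceptual statement --- the poset of nonzero closed ideals is isomorphic to the poset of nonempty open sets, so CCC transfers for free --- at the cost of invoking the ideal classification; the paper's witness-function argument is more elementary and is closer in spirit to the noncommutative characterization it proves next in Proposition \ref{prop:alg_con}. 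Both are complete proofs.
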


\begin{proof}
Suppose first that $X$ has CCC and let $\{\mathcal{I}_\lambda\}_{\lambda \in \Lambda}$ 
be a family of nonzero mutually orthogonal ideals in $C_0(X)$.  
We can take an element $f_\lambda\in \mathcal{I}_\lambda$ of norm $1$ for each $\lambda$.  
Set $U_\lambda = \{ x \in X ~|~ |f_\lambda (x)| > 1/2 \}$.  
Then $\{U_\lambda\}_{\lambda \in \Lambda}$ is a family of nonempty mutually disjoint open 
subsets of $X$, whence $\#\Lambda \leq \omega$.  Thus, $C_0(X)$ has CCC by definition.  

If $X$ admits an uncountable family $\{U_\lambda\}_{\lambda \in \Lambda}$ of 
nonempty mutually disjoint open sets, 
then $\{ C_0(U_\lambda) \}_{\lambda \in \Lambda}$
is an uncountable family of nonzero mutually orthogonal ideals of $C_0(X)$.  
Therefore, $C_0(X)$ does not have CCC.  
\end{proof}

The following easy proposition characterizes CCC.  
Note that a von Neumann algebra is said to be \emph{$\sigma$-finite} 
if it admits no uncountable family of mutually orthogonal projections.  

\begin{prop}\label{prop:alg_con}
\leavevmode
\begin{enumerate}
\item Let $\mathcal{A}$ be a C*-algebra.  Then $\mathcal{A}$ has CCC 
if and only if there exists no family $\{ a_\lambda \}_{\lambda \in \Lambda}$ of
nonzero elements such that $a_\lambda \mathcal{A} a_\mu = 0$ for $\lambda \neq \mu$.  
\item A von Neumann algebra has CCC if and only if its center is $\sigma$-finite.  
\end{enumerate}
\end{prop}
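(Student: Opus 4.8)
The plan is to establish each equivalence by contraposition, in both cases translating statements about orthogonal ideals into statements about individual elements via the closed two-sided ideal $\overline{\mathcal{A}a\mathcal{A}}$ generated by a single element $a$.

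For (i): if CCC fails, choose an uncountable family $\{\mathcal{I}_\lambda\}_{\lambda\in\Lambda}$ of nonzero mutually orthogonal ideals and pick $0\neq a_\lambda\in\mathcal{I}_\lambda$; since each $\mathcal{I}_\lambda$ is two-sided, $a_\lambda\mathcal{A}a_\mu\subseteq\mathcal{I}_\lambda\cap\mathcal{I}_\mu=0$ for $\lambda\neq\mu$, which gives the desired family of elements. Conversely, given an uncountable family $\{a_\lambda\}$ of nonzero elements with $a_\lambda\mathcal{A}a_\mu=0$ for $\lambda\neq\mu$, put $\mathcal{I}_\lambda:=\overline{\mathcal{A}a_\lambda\mathcal{A}}$. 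This is a closed two-sided ideal, and it is nonzero because $a_\lambda$ lies in it: for an approximate unit $(e_i)$ of $\mathcal{A}$ one has $e_ia_\lambda e_j\to a_\lambda$, so $a_\lambda\in\overline{\mathcal{A}a_\lambda\mathcal{A}}$. For $\lambda\neq\mu$, every product of an element of $\mathcal{A}a_\lambda\mathcal{A}$ with one of $\mathcal{A}a_\mu\mathcal{A}$ lies in $\mathcal{A}(a_\lambda\mathcal{A}a_\mu)\mathcal{A}=0$, so continuity of multiplication gives $\mathcal{I}_\lambda\mathcal{I}_\mu=0$, whence $\mathcal{I}_\lambda\cap\mathcal{I}_\mu=\overline{\mathcal{I}_\lambda\mathcal{I}_\mu}=0$ by the remark following Definition~\ref{dfn:CCC_C*-algebra}. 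Thus $\{\mathcal{I}_\lambda\}$ witnesses the failure of CCC.

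For (ii), realise the von Neumann algebra as $M\subseteq B(H)$. If $Z(M)$ is not $\sigma$-finite, take an uncountable family $\{z_\lambda\}$ of nonzero mutually orthogonal central projections; then (i) applies with $a_\lambda=z_\lambda$, since centrality gives $z_\lambda Mz_\mu=z_\lambda z_\mu M=0$ for $\lambda\neq\mu$, so $M$ fails CCC. Conversely, suppose $M$ fails CCC and fix an uncountable family $\{\mathcal{I}_\lambda\}$ of nonzero mutually orthogonal norm-closed two-sided ideals. Let $z_\lambda$ be the central projection with $\overline{\mathcal{I}_\lambda}^{\,w*}=Mz_\lambda$ (the weak-$*$ closure of a two-sided ideal of $M$ is again a two-sided ideal, hence of this form), which is nonzero because $\mathcal{I}_\lambda\neq0$. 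From $\mathcal{I}_\lambda\cap\mathcal{I}_\mu=0$ and the same remark, $\mathcal{I}_\lambda\mathcal{I}_\mu=0$, and separate weak-$*$ continuity of multiplication upgrades this to $Mz_\lambda Mz_\mu=0$; in particular $z_\lambda z_\mu=0$ for $\lambda\neq\mu$. So $\{z_\lambda\}$ is an uncountable family of nonzero mutually orthogonal central projections, contradicting $\sigma$-finiteness of $Z(M)$.

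I do not expect a genuine obstacle here; the steps needing the most care are the two standard von Neumann algebra facts invoked in (ii) — that the weak-$*$ closure of a two-sided ideal of $M$ has the form $Mz$ for a central projection $z$, and that orthogonality of ideals is inherited by their weak-$*$ closures via separate weak-$*$ continuity of the product — together with the routine approximate-unit argument showing $a\in\overline{\mathcal{A}a\mathcal{A}}$. Everything else reduces to bookkeeping with two-sided ideals and the identity $\mathcal{I}\cap\mathcal{J}=\overline{\mathcal{I}\mathcal{J}}$.
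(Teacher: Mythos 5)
Your proof is correct and follows essentially the same route as the paper: for (i) the passage between orthogonal ideals and elements via $\overline{\mathcal{A}a_\lambda\mathcal{A}}$, and for (ii) the identification of $\sigma$-weak closures of ideals with $\mathcal{M}z$ for central projections $z$, with orthogonality of ideals matching orthogonality of these projections. You merely spell out a few steps the paper leaves implicit (the approximate-unit argument and the separate weak-$*$ continuity), which is fine.
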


\begin{proof}
\noindent
(i) Suppose that there is an uncountable family $\{ a_\lambda \}_{\lambda \in \Lambda}$ of
nonzero elements such that $a_\lambda \mathcal{A} a_\mu = 0$ for $\lambda \neq \mu$.   
For each $\lambda \in \Lambda$, 
let $\mathcal{I}_\lambda = \overline{\mathcal{A} a_\lambda \mathcal{A}}$ be 
the ideal generated by $a_\lambda$.  
Then $\{\mathcal{I}_\lambda\}_{\lambda \in \Lambda}$ is an uncountable family of 
nonzero mutually orthogonal ideals, so $\mathcal{A}$ does not have CCC.  

Conversely, assume that $\mathcal{A}$ does not have CCC and   
let $\{ \mathcal{I}_{\lambda \in \Lambda} \}$ be an uncountable family of nonzero 
mutually orthogonal ideals.  Taking nonzero $a_\lambda \in \mathcal{I}_\lambda$ 
for each $\lambda$, we obtain $a_\lambda \mathcal{A} a_\mu = 0$ for $\lambda \neq \mu$ 
because $\mathcal{I}_\lambda \mathcal{I}_\mu = 0$.  \\
(ii) Let $\mathcal{I}_1, \mathcal{I}_2$ be ideals of a von Neumann algebra $\mathcal{M}$.  
Then it can be easily verified that $\mathcal{I}_1\mathcal{I}_2 = 0$ if and only if 
$\bar{\mathcal{I}_1}^{\sigma{\rm w}}\bar{\mathcal{I}_2}^{\sigma{\rm w}} = 0$, 
where $\bar{\mathcal{I}_i}^{\sigma{\rm w}}$ denotes the 
$\sigma$-weak closure of $\mathcal{I}_i$.  
Now $\bar{\mathcal{I}_i}^{\sigma{\rm w}}$ is of the form $\mathcal{M}z_i$ 
for a central projection $z_i$, and the two ideals are orthogonal if and only if 
these projections are orthogonal.  
Therefore, $\mathcal{M}$ has CCC if and only if there is no uncountable family of 
nonzero mutually orthogonal projections, that is, $\sigma$-finite.   
\end{proof}

\begin{prop}
A separable C*-algebra has CCC.  
\end{prop}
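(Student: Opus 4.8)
The plan is to use that a separable C*-algebra, viewed with its norm topology, is a separable metric space and therefore has CCC as a topological space; one then only has to convert a putative uncountable family of orthogonal ideals into an uncountable family of disjoint open subsets. Suppose, for a contradiction, that the separable C*-algebra $\mathcal{A}$ admits an uncountable family $\{\mathcal{I}_\lambda\}_{\lambda \in \Lambda}$ of nonzero mutually orthogonal ideals. For each $\lambda$ choose $a_\lambda \in \mathcal{I}_\lambda$ with $\|a_\lambda\| = 1$.

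The key estimate is $\|a_\lambda - a_\mu\| \geq 1$ whenever $\lambda \neq \mu$. Since the ideals are orthogonal, $\mathcal{I}_\lambda \cap \mathcal{I}_\mu = \overline{\mathcal{I}_\lambda \mathcal{I}_\mu} = 0$, and as ideals are $*$-closed this gives $a_\lambda^* a_\mu = a_\mu^* a_\lambda = 0$; hence
\[
  (a_\lambda - a_\mu)^*(a_\lambda - a_\mu) = a_\lambda^* a_\lambda + a_\mu^* a_\mu \geq a_\lambda^* a_\lambda \geq 0 ,
\]
so by the C*-identity $\|a_\lambda - a_\mu\|^2 = \|a_\lambda^* a_\lambda + a_\mu^* a_\mu\| \geq \|a_\lambda^* a_\lambda\| = \|a_\lambda\|^2 = 1$.

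It follows that the open balls $B_\lambda = \{\, a \in \mathcal{A} : \|a - a_\lambda\| < 1/2 \,\}$, for $\lambda \in \Lambda$, are nonempty and pairwise disjoint: if some $a$ lay in $B_\lambda \cap B_\mu$ with $\lambda \neq \mu$, then $\|a_\lambda - a_\mu\| \leq \|a_\lambda - a\| + \|a - a_\mu\| < 1$, contradicting the previous paragraph. Thus $\{B_\lambda\}_{\lambda \in \Lambda}$ is an uncountable family of nonempty mutually disjoint open subsets of $\mathcal{A}$. But a countable dense subset of $\mathcal{A}$ meets at most countably many of the $B_\lambda$, forcing $\Lambda$ to be countable — a contradiction. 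Hence $\mathcal{A}$ has CCC.

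Nothing here poses a genuine difficulty; the only slightly substantive point is the norm estimate $\|a_\lambda - a_\mu\| \geq 1$, which is precisely where the orthogonality of the ideals is used, and it follows at once from the C*-identity together with $0 \leq a_\lambda^* a_\lambda \leq a_\lambda^* a_\lambda + a_\mu^* a_\mu$. (Passing to the commutative C*-subalgebra generated by the orthogonal positive elements $a_\lambda^* a_\lambda$ and $a_\mu^* a_\mu$ even gives $\|a_\lambda - a_\mu\| = 1$, but the inequality already suffices.)
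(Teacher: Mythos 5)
Your proof is correct and follows essentially the same route as the paper: pick norm-one elements of the orthogonal ideals, show they are pairwise at distance at least $1$, and conclude that the resulting disjoint open balls of radius $1/2$ contradict separability. The only cosmetic difference is that the paper takes the $a_\lambda$ positive and gets $\|h_\lambda-h_\mu\|=1$ by functional calculus, whereas you allow arbitrary norm-one elements and derive $\|a_\lambda-a_\mu\|\geq 1$ from the C*-identity together with $0\leq a_\lambda^*a_\lambda\leq a_\lambda^*a_\lambda+a_\mu^*a_\mu$; both steps are valid.
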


\begin{proof}
Suppose that $\mathcal{A}$ does not have CCC, 
and $\{\mathcal{I}_\lambda\}_{\lambda \in \Lambda}$ be 
an uncountable family of nonzero mutually orthogonal ideals.  
If $h_\lambda \in \mathcal{I}_\lambda$ is a positive element of norm $1$, 
then it follows by functional calculus that $\|h_\lambda-h_\mu\| = 1$.  
If we denote by $U_\lambda$ the open ball of radius $1/2$ centered at $h_\lambda$, 
then $\{U_\lambda\}_{\lambda \in \Lambda}$ is 
an uncountable family of mutually disjoint open subsets.  
Hence, $\mathcal{A}$ is not separable.  
\end{proof}

An ideal of a CCC C*-algebra clearly has CCC.  
Also, it can be easily verified that an extension of a CCC C*-algebra by a CCC C*-algebra has CCC.  
On the other hand, a quotient of a CCC C*-algebra does not necessarily have CCC.  
Indeed, let $\beta\mathbb{N}$ be the Stone-\v{C}ech compactification of $\mathbb{N}$.  
It has CCC because it is separable.  However, the Stone-\v{C}ech remainder 
$\beta\mathbb{N} \setminus \mathbb{N}$ does not have CCC because 
there exists an almost disjoint family of $2^\omega$ subsets of $\omega$ 
\cite[Theorem II.1.3]{Kunen}.  
Therefore, $C(\beta\mathbb{N} \setminus \mathbb{N})$ does not have CCC, 
although it is the quotient of the CCC C*-algebra 
$C(\beta\mathbb{N}) \simeq \ell^\infty$ by $C_0(\mathbb{N}) \simeq c_0$.  

Since $C(\beta\mathbb{N} \setminus \mathbb{N})$ can be obtained as the inductive limit of
$\ell^\infty \stackrel{\varphi}{\to} \ell^\infty \stackrel{\varphi}{\to} \cdots$, 
where $\varphi \colon \ell^\infty \to \ell^\infty$ is defined by $\varphi(f)(n) = f(n+1)$, 
it also follows that inductive limits of CCC C*-algebras do not necessarily have CCC.  
On this direction, what we can prove is the following: 

\begin{prop}\label{prop:CCC_union}
Let $\mathcal{A}$ be a C*-algebra and $\kappa$ be an infinite cardinal number 
with its cofinality not equal to $\omega_1$.  
If there is an increasing sequence $\{\mathcal{A}_\alpha\}_{\alpha < \kappa}$ of 
CCC C*-subalgebras such that 
$\overline{\bigcup_{\alpha < \kappa} \mathcal{A}_\alpha} = \mathcal{A}$, 
then $\mathcal{A}$ has CCC.  
\end{prop}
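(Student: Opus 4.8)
The plan is to argue by contradiction: suppose $\mathcal{A}$ fails CCC, so there is an uncountable — hence $\omega_1$-sized — family $\{\mathcal{I}_\lambda\}_{\lambda < \omega_1}$ of nonzero mutually orthogonal ideals. For each $\lambda$ pick a positive $a_\lambda$ of norm $1$ in $\mathcal{I}_\lambda$; by orthogonality $a_\lambda \mathcal{A} a_\mu = 0$ for $\lambda \neq \mu$ (as in Proposition \ref{prop:alg_con}). Since $\bigcup_{\alpha<\kappa}\mathcal{A}_\alpha$ is dense, for each $\lambda$ I can choose $\alpha(\lambda) < \kappa$ and $b_\lambda \in \mathcal{A}_{\alpha(\lambda)}$ with $\|a_\lambda - b_\lambda\|$ small, say $< 1/4$; then $\|b_\lambda\| > 3/4$ so each $b_\lambda$ is nonzero, and moreover $b_\lambda^* \mathcal{A} b_\mu$ is ``almost zero'' in a sense to be made precise. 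The idea is that an uncountable subfamily of the $b_\lambda$'s, all living in a single $\mathcal{A}_\alpha$, should witness the failure of CCC for $\mathcal{A}_\alpha$, contradicting the hypothesis.

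Next I would exploit the cofinality assumption. Because $\operatorname{cf}(\kappa) \neq \omega_1$, the map $\lambda \mapsto \alpha(\lambda)$ from $\omega_1$ into $\kappa$ cannot be cofinal: if $\operatorname{cf}(\kappa) = \omega$ then $\kappa$ is a countable union of smaller ordinals and a pigeonhole argument on $\omega_1$ confines uncountably many $\lambda$ below some $\alpha < \kappa$; if $\operatorname{cf}(\kappa) > \omega_1$ then the range of an $\omega_1$-indexed function is bounded below $\kappa$ outright; and if $\operatorname{cf}(\kappa) = \omega_1$ is excluded by hypothesis. (When $\kappa = \omega_1$ itself the statement would fail, which is exactly why it is ruled out.) So I obtain an uncountable $S \subseteq \omega_1$ and a fixed $\alpha < \kappa$ with $b_\lambda \in \mathcal{A}_\alpha$ for all $\lambda \in S$.

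The remaining point, and the one requiring the most care, is to convert the ``almost orthogonality'' of the $b_\lambda$ into genuine mutually orthogonal nonzero ideals inside $\mathcal{A}_\alpha$. The naive estimate $\|b_\lambda^* c\, b_\mu\| = \|b_\lambda^* c\, b_\mu - a_\lambda^* c\, a_\mu\| \leq \varepsilon$ for $c$ in the unit ball is not literally zero, so I cannot directly invoke Proposition \ref{prop:alg_con}. My intended fix is a functional-calculus truncation: replace $b_\lambda$ by $c_\lambda = (b_\lambda^* b_\lambda - 1/2)_+$ (or a similar spectral cutoff), which lies in $\mathcal{A}_\alpha$, is nonzero since $b_\lambda^* b_\lambda$ has spectrum reaching near $1$, and enjoys the masking property that $c_\lambda x = c_\lambda (b_\lambda^* b_\lambda)^{1/2} y$-type factorizations push the small error down to exactly zero once one uses that $a_\lambda \mathcal{A} a_\mu = 0$ exactly. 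Concretely, I expect to show $c_\lambda \mathcal{A}_\alpha c_\mu = 0$ for $\lambda \neq \mu$ in $S$ by writing elements of the cut-down corner in terms of $a_\lambda$ up to controllable error and iterating the cutoff so the error is annihilated; the main obstacle is choosing the truncation level and the approximation tolerance $\varepsilon$ coherently so that this cancellation is exact rather than merely approximate. Once that is done, $\{c_\lambda\}_{\lambda \in S}$ is an uncountable family witnessing $\neg\text{CCC}$ for $\mathcal{A}_\alpha$ via Proposition \ref{prop:alg_con}, contradicting the assumption that every $\mathcal{A}_\alpha$ has CCC, and the proof is complete.
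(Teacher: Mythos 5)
Your overall skeleton (argue by contradiction, push the witnesses down into a single $\mathcal{A}_\alpha$ using the cofinality hypothesis) is the right one, and your case analysis on $\operatorname{cf}(\kappa)$ is correct and matches what the paper does. But the step you yourself flag as ``requiring the most care'' is a genuine gap, not a detail: you never actually prove that the truncated elements $c_\lambda=(b_\lambda^*b_\lambda-1/2)_+$ satisfy $c_\lambda\,\mathcal{A}_\alpha\,c_\mu=0$ \emph{exactly}. The naive estimate only gives $\|b_\lambda^*c\,b_\mu\|\le O(\varepsilon)$, and a spectral cutoff of an almost-orthogonal pair is not exactly orthogonal in a noncommutative C*-algebra; the description ``iterating the cutoff so the error is annihilated'' does not identify a mechanism. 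If you want to salvage this route, the missing ingredient is R{\o}rdam's lemma: for positive $a,b$ with $\|a-b\|<\varepsilon$ one has $(b-\varepsilon)_+=d\,a\,d^*$ for some $d\in\mathcal{A}$, whence $(b_\lambda-\varepsilon)_+\,x\,(b_\mu-\varepsilon)_+=d_\lambda\bigl(a_\lambda (d_\lambda^*xd_\mu) a_\mu\bigr)d_\mu^*=0$ exactly, using $a_\lambda\mathcal{A}a_\mu=0$. That is a nontrivial input you neither cite nor prove, so as written the argument does not close. (A smaller point: your parenthetical that the statement ``would fail'' for $\operatorname{cf}(\kappa)=\omega_1$ is unjustified --- only the proof breaks down there; no counterexample is produced.)

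The paper avoids this analytic difficulty entirely by staying at the level of ideals. By Lemma \ref{lem:union_lemma}, each nonzero ideal $\mathcal{I}_\lambda$ equals $\overline{\bigcup_\alpha(\mathcal{I}_\lambda\cap\mathcal{A}_\alpha)}$, so there is a least $\beta_\lambda$ with $\mathcal{I}_\lambda\cap\mathcal{A}_{\beta_\lambda}\neq 0$. The sets $\mathcal{I}_\lambda\cap\mathcal{A}_\alpha$ are ideals of $\mathcal{A}_\alpha$ and are \emph{exactly} mutually orthogonal for free, since $(\mathcal{I}_\lambda\cap\mathcal{A}_\alpha)(\mathcal{I}_\mu\cap\mathcal{A}_\alpha)\subset\mathcal{I}_\lambda\cap\mathcal{I}_\mu=0$; no approximation or truncation is ever needed. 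The cofinality dichotomy is then run on $\sup_\lambda\beta_\lambda$ exactly as in your second paragraph. I recommend you restructure your argument around Lemma \ref{lem:union_lemma} in this way; it replaces the entire problematic third paragraph with one line.
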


To prove this proposition, we use the lemma below.  
A proof can be found in \cite[Lemma III.4.1]{Davidson}.  

\begin{lem}\label{lem:union_lemma}
Let $\mathcal{A}$ be a C*-algebra and $\{\mathcal{A}_\alpha\}$ be a directed set of 
subalgebras with its union dense in $\mathcal{A}$.  
If $\mathcal{I}$ is an ideal of $\mathcal{A}$, then it is obtained as the closure of 
the union of $\{\mathcal{I} \cap \mathcal{A}_\alpha\}$.  
\end{lem}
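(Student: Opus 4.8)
The plan is to prove the two inclusions of $\mathcal{I} = \overline{\bigcup_\alpha (\mathcal{I} \cap \mathcal{A}_\alpha)}$ separately. One containment is immediate: each $\mathcal{I} \cap \mathcal{A}_\alpha$ is contained in the closed set $\mathcal{I}$, so the closure of their union is as well. The substance of the lemma is the reverse inclusion $\mathcal{I} \subseteq \overline{\bigcup_\alpha (\mathcal{I} \cap \mathcal{A}_\alpha)}$, which I would reduce to a single distance estimate.

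Fix $x \in \mathcal{I}$ and $\varepsilon > 0$. Since $\bigcup_\alpha \mathcal{A}_\alpha$ is dense in $\mathcal{A}$, I would first choose an index $\alpha$ and an element $a \in \mathcal{A}_\alpha$ with $\|x - a\| < \varepsilon$. The difficulty is that $a$ need not lie in $\mathcal{I}$; what I really want is an element of $\mathcal{I} \cap \mathcal{A}_\alpha$ close to $x$. Writing $\operatorname{dist}(a, S) = \inf_{s \in S}\|a - s\|$, it therefore suffices to show that $\operatorname{dist}(a, \mathcal{I} \cap \mathcal{A}_\alpha)$ is small. Since $x \in \mathcal{I}$, we certainly have $\operatorname{dist}(a, \mathcal{I}) \le \|a - x\| < \varepsilon$, so everything comes down to comparing the distance from $a$ to $\mathcal{I} \cap \mathcal{A}_\alpha$ with the distance from $a$ to $\mathcal{I}$.

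The key step, and the main obstacle, is thus the assertion that for the C*-subalgebra $\mathcal{A}_\alpha$ and the closed ideal $\mathcal{I}$ one has $\operatorname{dist}(a, \mathcal{I} \cap \mathcal{A}_\alpha) = \operatorname{dist}(a, \mathcal{I})$ for every $a \in \mathcal{A}_\alpha$. I would establish this by observing that $\mathcal{I} \cap \mathcal{A}_\alpha$ is a closed ideal of $\mathcal{A}_\alpha$, so that $\mathcal{A}_\alpha/(\mathcal{I}\cap\mathcal{A}_\alpha)$ is again a C*-algebra, and that the natural map $\mathcal{A}_\alpha/(\mathcal{I}\cap\mathcal{A}_\alpha) \to \mathcal{A}/\mathcal{I}$ sending $a + (\mathcal{I}\cap\mathcal{A}_\alpha)$ to $a + \mathcal{I}$ is a well-defined, injective $*$-homomorphism. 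Invoking the fundamental fact that an injective $*$-homomorphism between C*-algebras is isometric, this map preserves norms; since the two quotient norms are precisely $\operatorname{dist}(a, \mathcal{I}\cap\mathcal{A}_\alpha)$ and $\operatorname{dist}(a, \mathcal{I})$, the desired equality follows.

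With this in hand the proof closes quickly: I obtain $z \in \mathcal{I} \cap \mathcal{A}_\alpha$ with $\|a - z\| < \varepsilon$, whence $\|x - z\| < 2\varepsilon$, and since $\varepsilon$ was arbitrary this gives $x \in \overline{\bigcup_\alpha (\mathcal{I} \cap \mathcal{A}_\alpha)}$. I note in passing that the directedness hypothesis is not actually needed for this argument, only the density of $\bigcup_\alpha \mathcal{A}_\alpha$; it is nonetheless harmless, and indeed directedness is what guarantees that $\bigcup_\alpha (\mathcal{I}\cap\mathcal{A}_\alpha)$ is itself a subalgebra, the form in which the conclusion is applied.
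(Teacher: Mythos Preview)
Your argument is correct; the reduction to the isometry of the natural map $\mathcal{A}_\alpha/(\mathcal{I}\cap\mathcal{A}_\alpha)\hookrightarrow\mathcal{A}/\mathcal{I}$ is exactly the standard route. Note, however, that the paper does not prove this lemma at all: it simply cites \cite[Lemma III.4.1]{Davidson}, whose proof is essentially the one you have written.
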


\begin{proof}[Proof of Proposition \ref{prop:CCC_union}]
Assume that there is an uncountable family $\{\mathcal{I}_\lambda\}_{\lambda < \omega_1}$ of 
nonzero mutually orthogonal ideals of $\mathcal{A}$.  
For each $\lambda$, set 
\[
\beta_\lambda = 
\min\{\alpha < \kappa ~|~ \mathcal{I}_\lambda \cap \mathcal{A}_\alpha \neq 0\}, 
\]
which exists by Lemma \ref{lem:union_lemma}, and write $\beta = \sup_\lambda \beta_\lambda$.  
If $\beta < \kappa$ holds, then $\{\mathcal{A}_\beta \cap \mathcal{I}_\lambda\}_\lambda$ is 
an uncountable family of nonzero mutually orthogonal ideals, which contradicts to the fact that 
$\mathcal{A}_\beta$ has CCC.  On the other hand, if $\beta = \kappa$, 
then the cofinality of $\kappa$ is $\omega$, whence there is an unbounded increasing sequence 
$\{\gamma_n\}_{n < \omega}$ in $\kappa$.  
Now the set $S_n$ of $\lambda < \omega_1$ with $\beta_\lambda < \gamma_n$ is at most 
countable for each $n$, whence $\omega_1 = \#\bigl(\bigcup_n S_n \bigr) \leq \omega$, 
a contradiction.  
\end{proof}

We close this section by looking at the relation between CCC and von Neumann tensor products.  
The following proposition, together with results in Section \ref{sec:tensor}, reveals 
that the situation in the von Neumann algebra setting differs from that in case of C*-algebras.   

\begin{prop}\label{prop:vN_CCC}
Let $\mathcal{M}$ and $\mathcal{N}$ be CCC von Neumann algebras.  
Then the tensor product $\mathcal{M} \bar{\otimes} \mathcal{N}$ of $\mathcal{M}$ and 
$\mathcal{N}$ as a von Neumann algebra also has CCC.  
\end{prop}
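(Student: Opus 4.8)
The plan is to reduce the statement to the characterization in Proposition \ref{prop:alg_con}(ii): a von Neumann algebra has CCC if and only if its center is $\sigma$-finite. So the goal becomes to show that if $Z(\mathcal{M})$ and $Z(\mathcal{N})$ are $\sigma$-finite, then $Z(\mathcal{M}\bar{\otimes}\mathcal{N})$ is $\sigma$-finite. The key structural fact I would invoke is that the center of a von Neumann tensor product is the von Neumann tensor product of the centers, i.e. $Z(\mathcal{M}\bar{\otimes}\mathcal{N}) = Z(\mathcal{M})\bar{\otimes}Z(\mathcal{N})$; this is standard (it follows from the commutation theorem for von Neumann tensor products). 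Hence it suffices to prove the proposition in the special case where $\mathcal{M}$ and $\mathcal{N}$ are abelian, and then the whole statement reduces to: the von Neumann tensor product of two $\sigma$-finite abelian von Neumann algebras is $\sigma$-finite.

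For the abelian case, the cleanest route is via normal faithful states. A von Neumann algebra is $\sigma$-finite precisely when it admits a faithful normal state. So pick faithful normal states $\varphi$ on $\mathcal{M}$ and $\psi$ on $\mathcal{N}$, form the product state $\varphi\otimes\psi$ on the algebraic tensor product, and observe that it extends to a normal state on $\mathcal{M}\bar{\otimes}\mathcal{N}$ (this is one of the basic properties of the spatial/von Neumann tensor product: product states of normal states are normal). The remaining point is faithfulness of $\varphi\otimes\psi$ on $\mathcal{M}\bar{\otimes}\mathcal{N}$, which then gives $\sigma$-finiteness of the tensor product, and applying Proposition \ref{prop:alg_con}(ii) once more finishes the proof.

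I expect the faithfulness of the product state to be the one step that needs genuine care rather than a citation. For abelian $\mathcal{M} \cong L^\infty(X,\mu)$ and $\mathcal{N} \cong L^\infty(Y,\nu)$ with $\mu,\nu$ probability measures, $\mathcal{M}\bar\otimes\mathcal{N} \cong L^\infty(X\times Y, \mu\times\nu)$ and $\varphi\otimes\psi$ is integration against $\mu\times\nu$, which is faithful because a nonzero positive element of $L^\infty(X\times Y)$ is supported on a set of positive product measure. Alternatively, and avoiding any measure-theoretic representation, one can argue directly: if $0 \le t \in \mathcal{M}\bar\otimes\mathcal{N}$ and $(\varphi\otimes\psi)(t) = 0$, use the slice maps (right slice $R_\psi \colon \mathcal{M}\bar\otimes\mathcal{N} \to \mathcal{M}$ associated to $\psi$) — $R_\psi(t)$ is a positive element of $\mathcal{M}$ with $\varphi(R_\psi(t)) = (\varphi\otimes\psi)(t) = 0$, so $R_\psi(t) = 0$ by faithfulness of $\varphi$, and then faithfulness of $\psi$ on the slices forces $t = 0$. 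Either way the argument is short; the substantive content is entirely in the two classical facts ($Z$ commutes with $\bar\otimes$, and product states of normal states are normal), both of which may simply be cited.
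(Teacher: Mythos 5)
Your proposal is correct and follows essentially the same route as the paper: reduce to the abelian case via $Z(\mathcal{M}\bar{\otimes}\mathcal{N})=Z(\mathcal{M})\bar{\otimes}Z(\mathcal{N})$, then handle abelian algebras by identifying $L^\infty(\mu)\bar{\otimes}L^\infty(\nu)$ with $L^\infty(\mu\otimes\nu)$ (the paper phrases $\sigma$-finiteness via $\sigma$-finiteness of the measures rather than via faithful normal states, but this is the same argument). Your slice-map variant for faithfulness of the product state is a fine alternative, though the final step should be spelled out by slicing on both sides so that all product functionals vanish on $t$.
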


\begin{proof}
We shall denote by $\mathcal{Z}(\mathcal{M}), \mathcal{Z}(\mathcal{N})$ 
and $\mathcal{Z}(\mathcal{M} \bar{\otimes} \mathcal{N})$ the centers of 
$\mathcal{M}, \mathcal{N}$ and $\mathcal{M} \bar{\otimes} \mathcal{N}$ respectively.   
Recall that $\mathcal{Z}(\mathcal{M} \bar{\otimes} \mathcal{N})$ coincides with 
$\mathcal{Z}(\mathcal{M}) \bar{\otimes} \mathcal{Z}(\mathcal{N})$ 
\cite[Corollary IV.5.11]{Takesaki1}.  Hence it suffices to show that the tensor product of 
two abelian $\sigma$-finite von Neumann algebras is also $\sigma$-finite.  
To see this, note that every abelian von Neumann algebra is of the form 
$L^\infty(\mu)$ for some Radon measure $\mu$ \cite[Theorem III.1.18]{Takesaki1},  
and it is $\sigma$-finite if and only if $\mu$ is $\sigma$-finite.  
Since $L^\infty(\mu) \bar{\otimes} L^\infty(\nu)$, 
being canonically isomorphic to $L^\infty(\mu \otimes \nu)$, is $\sigma$-finite 
if $L^\infty(\mu)$ and $L^\infty(\nu)$ are both $\sigma$-finite, 
the conclusion follows.  
\end{proof}

A compact Hausdorff space is a \emph{stonean space} if the closure of every open set is open.  
Suppose that $X$ is a stonean space and $\mu$ is a Borel measure on it.  
If for any increasing family $\{f_i\} \in C_\mathbb{R}(X)$ with $\sup f_i = f \in C_\mathbb{R}(X)$ 
the equality $\mu(f) = \sup \mu(f_i)$ holds, then $\mu$ is said to be \emph{normal}.  
A stonean space is called a \emph{hyperstonean space} if 
for any nonzero positive $f \in C_\mathbb{R}(X)$ there exists a normal Borel measure $\mu$ such that 
$\mu(f) > 0$.  It is known that if $X$ is hyperstonean, then $C(X)$ is a von Neumann algebra, 
and every abelian von Neumann algebra is of this form \cite[Theorem III.1.18]{Takesaki1}.  
Combining this fact with the preceding proposition, we obtain the following result.  

\begin{cor}
The direct product of two hyperstonean CCC spaces has CCC.  
\end{cor}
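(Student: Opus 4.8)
The plan is to reduce the statement about direct products of hyperstonean spaces to the von Neumann tensor product result of Proposition \ref{prop:vN_CCC}, via the dictionary between hyperstonean spaces and abelian von Neumann algebras. Concretely, let $X$ and $Y$ be hyperstonean CCC spaces; since $X$ and $Y$ are hyperstonean, $C(X)$ and $C(Y)$ are abelian von Neumann algebras. The first step is to observe that CCC of the space $X$ is equivalent to CCC of $C(X)$ as a C*-algebra: for compact Hausdorff $X$ this is exactly Proposition \ref{prop:CCC_equiv_CCC}, and since a von Neumann algebra has CCC iff its center is $\sigma$-finite by Proposition \ref{prop:alg_con}(ii), and $C(X)$ is its own center, $X$ is CCC iff $C(X)$ is $\sigma$-finite. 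Hence $C(X)$ and $C(Y)$ are $\sigma$-finite abelian von Neumann algebras, and Proposition \ref{prop:vN_CCC} tells us $C(X) \bar{\otimes} C(Y)$ has CCC, i.e. is $\sigma$-finite.

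The second step is to identify $C(X) \bar{\otimes} C(Y)$ with $C(X \times Y)$. This is where a subtlety appears: the von Neumann tensor product $C(X) \bar{\otimes} C(Y)$ is again an abelian von Neumann algebra, hence of the form $C(Z)$ for a (unique) hyperstonean space $Z$ by the cited characterization \cite[Theorem III.1.18]{Takesaki1}. One must check that $Z$ is homeomorphic to $X \times Y$ — but this is \emph{false} in general, because $X \times Y$ need not even be stonean. What is true is that $Z$ is the hyperstonean cover (equivalently, the spectrum) of the von Neumann algebra generated by $C(X)\otimes C(Y)$, and $X\times Y$ maps onto a dense subset. So rather than claiming a homeomorphism, I would argue that CCC of $X \times Y$ follows because $X\times Y$ carries a $\pi$-base — the products $U\times V$ of nonempty open sets of $X$ and $Y$ — coming from projections in $C(X)\otimes C(Y) \subseteq C(Z)$, and an uncountable family of mutually disjoint such basic open rectangles in $X\times Y$ would give, after passing to closures which are clopen in $Z$ (stonean), an uncountable family of mutually orthogonal nonzero projections in $C(Z) = C(X)\bar\otimes C(Y)$, contradicting $\sigma$-finiteness.

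So the argument runs: suppose $\{U_\lambda \times V_\lambda\}_{\lambda\in\Lambda}$ is an uncountable family of nonempty open rectangles in $X\times Y$ that are mutually disjoint; since such rectangles form a base, it suffices to rule this out. Let $p_\lambda \in C(X)$ and $q_\lambda \in C(Y)$ be the projections corresponding to the clopen hulls $\overline{U_\lambda}$ and $\overline{V_\lambda}$ (clopen because $X, Y$ are stonean), and put $r_\lambda = p_\lambda \otimes q_\lambda \in C(X)\bar\otimes C(Y)$, a nonzero projection. Disjointness of $U_\lambda\times V_\lambda$ and $U_\mu\times V_\mu$ forces $(\overline{U_\lambda}\cap\overline{U_\mu})\times(\overline{V_\lambda}\cap\overline{V_\mu})$ to have empty interior, hence to be empty by stoneanness, so $p_\lambda q_\lambda\otimes$-type products vanish; a short computation gives $r_\lambda r_\mu = (p_\lambda p_\mu)\otimes(q_\lambda q_\mu) = 0$ whenever $\lambda\neq\mu$. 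Thus $\{r_\lambda\}$ is an uncountable family of mutually orthogonal nonzero projections in $C(X)\bar\otimes C(Y)$, contradicting $\sigma$-finiteness established in the first step. Therefore $X\times Y$ has CCC.

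The main obstacle I expect is precisely the mismatch between $X\times Y$ and the Gelfand spectrum of $C(X)\bar\otimes C(Y)$: one must resist the temptation to assert $C(X\times Y)\simeq C(X)\bar\otimes C(Y)$ (which would be wrong, as $C(X\times Y)$ is typically not even a von Neumann algebra) and instead work with the $\pi$-base of rectangles and the clopen-hull trick to transfer a putative uncountable cellular family in $X\times Y$ into an uncountable orthogonal family of projections in the tensor product von Neumann algebra. Once that transfer is set up cleanly, the rest is a direct appeal to Proposition \ref{prop:vN_CCC} together with Propositions \ref{prop:CCC_equiv_CCC} and \ref{prop:alg_con}(ii).
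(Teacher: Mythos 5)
Your proof is correct, but its second half takes a genuinely different route from the paper's. You rightly refuse to identify $C(X\times Y)$ with $C(X)\bar{\otimes}C(Y)$; the paper, however, never needs that identification. It uses the correct isomorphism $C(X\times Y)\simeq C(X)\otimes C(Y)$ (the \emph{minimal} C*-tensor product), which embeds as a C*-subalgebra of $C(X)\bar{\otimes}C(Y)$, and then observes via Proposition \ref{prop:alg_con}(i) that a C*-subalgebra of a commutative CCC C*-algebra has CCC: for nonzero $a_\lambda$ in the subalgebra, $a_\lambda a_\mu=0$ already forces $a_\lambda\mathcal{A}a_\mu=0$ in the ambient commutative algebra. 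Your argument instead transfers a putative uncountable cellular family of $X\times Y$ by hand: shrink to disjoint open rectangles, take clopen hulls (in a stonean space disjoint open sets have disjoint clopen closures, applied in whichever coordinate witnesses disjointness of a given pair of rectangles), and land on an uncountable orthogonal family of projections in the $\sigma$-finite algebra $C(X)\bar{\otimes}C(Y)$. Both proofs share the first step (Proposition \ref{prop:vN_CCC} plus Propositions \ref{prop:CCC_equiv_CCC} and \ref{prop:alg_con}(ii)); yours exploits stoneanness of the factors and makes the transfer between spatial and operator-algebraic cellularity explicit, while the paper's is shorter and its subalgebra observation requires no topological input. One minor point: your claim that $(\overline{U_\lambda}\cap\overline{U_\mu})\times(\overline{V_\lambda}\cap\overline{V_\mu})$ is empty is justified a bit loosely; the clean argument is that both factors are clopen, so if both were nonempty, density of $U_\lambda$ in $\overline{U_\lambda}$ (and likewise for the other three sets) would yield $U_\lambda\cap U_\mu\neq\varnothing$ and $V_\lambda\cap V_\mu\neq\varnothing$, contradicting disjointness of the rectangles. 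The conclusion $r_\lambda r_\mu=0$ is nevertheless correct.
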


\begin{proof}
Let $X, Y$ be hyperstonean CCC spaces.  
It follows from Proposition \ref{prop:vN_CCC} that the von Neumann tensor product 
$C(X) \bar{\otimes} C(Y)$ has CCC, and  
$C(X) \otimes C(Y)$, which is isomorphic to $C(X \times Y)$, 
is a C*-subalgebra of $C(X) \bar{\otimes} C(Y)$.  
By Proposition \ref{prop:alg_con}, it can be easily verified that
 any C*-subalgebra of commutative CCC C*-algebra has CCC, 
whence $X \times Y$ has CCC.  
\end{proof}

\section{Martin's axiom and Suslin's hypothesis}\label{sec:MA}
In this section, we introduce two statements which are known to be independent from $\mathsf{ZFC}$.  
Complete treatise for these statements can be found in \cite{Kunen} or \cite{Jech}.  

The first statement is Martin's axiom.  
We shall introduce some definitions related to partially ordered sets 
in order to express this axiom in a simple form.  

\begin{dfn}
Let $P$ be a nonempty partially ordered set.  
Two elements $p, q \in P$ are \emph{incompatible} if there is no $r \in P$ 
with $r \leq p$ and $r \leq q$.  
If there is no uncountable family of mutually incompatible elements in $P$, 
then $P$ is said to have the \emph{countable chain condition} (CCC).  
\end{dfn}

As is easily verified, a C*-algebra has CCC if and only if its nonzero ideals form 
a CCC partially ordered set, where the order is defined by inclusion.  
Similarly, a nonempty topological space has CCC if and only if 
the set of nonempty open subsets has CCC as a partially ordered set.  

\begin{dfn}
Let $P$ be a partially ordered set.  
\begin{enumerate}
\item A subset $D \subset P$ is \emph{dense} if for any $p \in P$ there is $q \in D$ with $q \leq p$.  
\item A nonempty subset $F \subset P$ is called a \emph{filter} on $P$ if it satisfies the following: 
\begin{enumerate}
\item if $p, q$ are in $F$, then there exists $r \in F$ with $r \leq p$ and $r \leq q$; 
\item if $p \in F$ and $q \geq p$, then $q \in F$.  
\end{enumerate}
\end{enumerate}
\end{dfn}

Suppose that $P$ is a nonempty partially ordered set and 
fix the topology generated by subsets of the form $\{ q \in P ~|~ q \leq p \}$ for $p \in P$.  
Then $P$ has CCC if and only if $P$ has CCC as a topological space, 
and $D \subset P$ is dense if and only if it is dense as a topological subspace.  

Now we shall see the exact statement of Martin's axiom $\mathsf{MA}$.  
Let $\kappa$ be a cardinal number.  

\bigskip
\begin{description}
\item[$\mathsf{MA}(\kappa)$] 
If $P$ is a nonempty CCC partially ordered set and $\{ D_\alpha \}_{\alpha \in \kappa}$ 
is a family of dense subsets in $P$, then there exists a filter $F$ on $P$ such that 
$F \cap D_\alpha$ is not empty for all $\alpha$.  
\item[$\mathsf{MA}$] $\mathsf{MA}(\kappa)$ holds for any $\kappa$ with 
$\omega \leq \kappa < 2^\omega$.  
\end{description}
\bigskip

It is known that $\mathsf{MA}(\omega)$ holds (the Rasiowa-Sikorski lemma) and 
$\mathsf{MA}(2^\omega)$ does not hold in $\mathsf{ZFC}$, 
whence the Continuum Hypothesis $\mathsf{CH}$ trivially implies $\mathsf{MA}$. 
On the other hand, $\mathsf{MA}$ is indeed consistent with $\mathsf{ZFC}+\neg\mathsf{CH}$.  
In particular,$\mathsf{ZFC}+\mathsf{MA}(\omega_1)$ is consistent.  

The other statement we use in this paper is Suslin's Hypothesis $\mathsf{SH}$.  
This hypothesis is related to characterization of the real line as an ordered set.  
Note that a totally ordered set with the following properties is order-isomorphic to the real line: 
\begin{enumerate}
\item unbounded; there does not exist minimum nor maximum element.  
\item dense; there is an element between any two elements.  
\item complete; every nonempty bounded subset has a supremum and an infimum.  
\item separable; there is a countable subset which is dense with respect to the usual order topology.  
\end{enumerate}

\begin{dfn}
Let $S$ be a totally ordered set which is unbounded, dense and complete.  
Then $S$ is called a \emph{Suslin line} if it is nonseparable but CCC as a topological space, 
where its topology is the usual order topology generated by open intervals.  
\end{dfn}

\bigskip
\begin{description}
\item[$\mathsf{SH}$] There does not exist a Suslin line.  
\end{description}
\bigskip

In other words, $\mathsf{SH}$ claims that separability in the characterization of the real line above 
can be replaced by CCC.  It is known that the diamond principle $\diamondsuit$, 
which is a consequence of the axiom of constructibility $\mathbf{V}=\mathbf{L}$, 
implies $\neg \mathsf{SH}$ \cite{Jensen}.  
On the other hand, $\mathsf{MA(\omega_1)}$ implies $\mathsf{SH}$, 
whence $\mathsf{SH}$ is independent from $\mathsf{ZFC}$.  

\begin{prop}
A Suslin line is a locally compact space.  
\end{prop}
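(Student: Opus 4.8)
The plan is to show that every point of a Suslin line $S$ has a compact neighborhood, using the order-completeness of $S$ together with its unboundedness and density. Recall that a totally ordered set equipped with the order topology is compact if and only if it is complete (every nonempty subset has a supremum and an infimum). More precisely, a closed order interval $[a,b] = \{x \in S \mid a \le x \le b\}$ in a complete totally ordered set is itself complete as an ordered set, hence compact in its order topology, and this order topology on $[a,b]$ coincides with the subspace topology inherited from $S$.

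First I would fix a point $p \in S$. Since $S$ is unbounded, there is no maximum and no minimum, so we may choose $a, b \in S$ with $a < p < b$. Then the open interval $(a,b) = \{x \in S \mid a < x < b\}$ is an open neighborhood of $p$, and its closure is contained in the closed interval $[a,b]$. Next I would argue that $[a,b]$ is compact: given any family of relative-closed sets with the finite intersection property, or equivalently using the supremum characterization, one checks that every open cover of $[a,b]$ by order-convex open sets has a finite subcover, exactly as in the classical proof that $[0,1]$ is compact on the real line — the key point being that the set of $t \in [a,b]$ such that $[a,t]$ is covered by finitely many members of the cover is nonempty, and its supremum (which exists by completeness) must equal $b$ and be attained. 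Thus $[a,b]$ is a compact neighborhood of $p$, and since $p$ was arbitrary, $S$ is locally compact.

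One point that requires a small amount of care is the identification of the subspace topology on $[a,b]$ with its intrinsic order topology; this holds because the subbasic open rays of $S$ meet $[a,b]$ in order-intervals of $[a,b]$, and conversely, using density and unboundedness of $S$ one sees that every order-interval of $[a,b]$ is the trace of an open set of $S$. Also, while the problem only asks for local compactness, note that $S$ itself need not be compact, since it is unbounded; local compactness is the best one can say. The main obstacle, such as it is, is purely the bookkeeping of the compactness argument for $[a,b]$ in the order-topological setting rather than anything genuinely set-theoretic — the CCC and non-separability of $S$ play no role here, only the axioms that $S$ is unbounded, dense, and complete.
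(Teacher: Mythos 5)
Your proposal is correct and follows essentially the same route as the paper: reduce to showing that closed bounded intervals are compact, then run the classical Heine--Borel argument, taking the supremum (which exists by completeness) of the set of points $t$ for which $[a,t]$ admits a finite subcover and showing it is attained and equals $b$. The only differences are that you spell out the reduction to local compactness via unboundedness and the agreement of the subspace and order topologies on $[a,b]$, both of which the paper leaves implicit.
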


\begin{proof}
It suffices to show that every bounded closed interval is compact.  
This can be verified by seeing that a proof for the Heine-Borel theorem can be applied to Suslin lines.  

Given an open covering $\{U_\lambda\}_{\lambda \in \Lambda}$ of 
a bounded closed interval $[a, b]$, we shall prove that $[a, b]$ can be covered by 
finitely many $U_\lambda$'s.  
Note that we may assume each $U_\lambda$ is an open interval.  

Let $X$ be the set of all $x \in [a, b]$ such that $[a, x]$ can be covered by 
finitely many $U_\lambda$'s.  Then $X$ is not empty because $a$ is in $X$, 
and so $\sup X$ exists by completeness.  
It suffices to show that $\sup X$ belongs to $X$ and coincides with $b$.  
For this, take $\lambda_0 \in \Lambda$ such that $\sup X$ is in $U_{\lambda_0}$.  
Then $X \cap U_{\lambda_0}$ contains some element, say $c$.  
Now $[a, c]$ can be covered by finitely many $U_\lambda$'s, 
and $[c, \sup X]$ is included in $U_{\lambda_0}$, so $\sup X$ is in $X$.    
Also, for any $x \in U_{\lambda_0}$, the interval $[a, x]$ can be covered by 
finitely many $U_\lambda$'s, whence $\sup X$ must coincide with $b$.  
\end{proof}

The following proposition is from \cite[Lemma II.4.3]{Kunen}.  
For the sake of completeness, we include the proof.  

\begin{prop}
If $S$ is a Suslin line, then $S \times S$ does not have CCC.  
\end{prop}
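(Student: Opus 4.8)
The plan is to construct, inside a Suslin line $S$, an uncountable family of mutually disjoint nonempty open rectangles in $S \times S$ by recursion on $\omega_1$. The key idea is that a Suslin line, being nonseparable, contains no countable dense set, so no countable collection of intervals can be dense; this ``lack of separability'' is exactly what obstructs CCC in the square even though CCC holds in each factor.

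First I would recall that if $S$ has CCC then any maximal family of pairwise disjoint open intervals is countable; taking its union $V$, the complement of $\overline{V}$ is empty (else we could enlarge the family), so $\overline{V} = S$ and in fact $S$ has a countable family of intervals whose union is dense. I would then argue that a Suslin line cannot have a countable \emph{dense} subset: pick by recursion, at each stage $\alpha < \omega_1$, a nonempty open interval $I_\alpha = (a_\alpha, b_\alpha)$ disjoint from the (countably many) previously chosen endpoints — this is possible precisely because at stage $\alpha$ the countable set $D_\alpha = \{a_\beta, b_\beta : \beta < \alpha\}$ is not dense, so its complement contains an open interval, and density plus unboundedness of $S$ lets us shrink to a nonempty proper subinterval avoiding $D_\alpha$ along with a fresh interior point $c_\alpha$.

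The heart of the construction is that these intervals can be arranged so that $I_\beta$ is \emph{not} contained in $I_\alpha$ for $\beta < \alpha$ — more precisely, for each pair $\beta < \alpha$ at least one endpoint of $I_\alpha$ lies strictly inside $I_\beta$ or the two intervals are disjoint. Standard Suslin-line bookkeeping gives, after passing to an uncountable subfamily, a family $\{(a_\alpha, b_\alpha)\}_{\alpha < \omega_1}$ with a fixed point $c_\alpha \in (a_\alpha, b_\alpha)$ such that for $\alpha \neq \beta$ we never have both $a_\beta, b_\beta \in (a_\alpha, b_\alpha)$; equivalently $c_\beta \notin (a_\alpha,b_\alpha)$ or $c_\alpha \notin (a_\beta, b_\beta)$. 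Then I would set
\[
W_\alpha = (a_\alpha, c_\alpha) \times (c_\alpha, b_\alpha) \subset S \times S,
\]
each a nonempty open rectangle, and check that $W_\alpha \cap W_\beta = \emptyset$ for $\alpha \neq \beta$: if a point $(x,y)$ were in both, then $a_\alpha < x < c_\alpha$ and $a_\beta < x < c_\beta$, while $c_\alpha < y < b_\alpha$ and $c_\beta < y < b_\beta$; combining, $c_\alpha \in (a_\beta, b_\beta)$ and $c_\beta \in (a_\alpha, b_\alpha)$, contradicting the arrangement. Hence $\{W_\alpha\}_{\alpha<\omega_1}$ witnesses the failure of CCC for $S \times S$.

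The main obstacle is the recursive construction guaranteeing the combinatorial property that rules out the nesting $a_\beta, b_\beta \in (a_\alpha, b_\alpha)$ for uncountably many pairs simultaneously; naively one only avoids finitely many or countably many ``bad'' configurations at a time, and one must invoke a pressing-down / $\Delta$-system style argument (or directly exploit that a Suslin line has no uncountable strictly descending chain of intervals, since that would yield an uncountable disjoint family of ``difference'' intervals, violating CCC of $S$) to thin the family to an uncountable subfamily with the clean property. Once that combinatorial core is in place, the verification that the $W_\alpha$ are disjoint is the short computation sketched above.
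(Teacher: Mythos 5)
Your target configuration and your final disjointness computation are exactly right: if $(x,y)\in W_\alpha\cap W_\beta$ then $a_\alpha<x<c_\beta$ and $c_\beta<y<b_\alpha$, so $c_\beta\in(a_\alpha,b_\alpha)$, and symmetrically $c_\alpha\in(a_\beta,b_\beta)$. So the \emph{only} property your family needs is: for $\beta<\alpha$, $c_\beta\notin(a_\alpha,b_\alpha)$. The gap is that your recursion does not secure this property, and the machinery you invoke to patch it is both unnecessary and doubtful. You choose $I_\alpha$ to avoid the previously chosen \emph{endpoints} $\{a_\beta,b_\beta:\beta<\alpha\}$; this only guarantees that $I_\alpha$ is disjoint from or contained in each earlier $I_\beta$, and in the nested case $c_\beta$ may well lie in $I_\alpha$. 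Your proposed repairs (pressing-down, $\Delta$-system, or thinning to an uncountable ``unnested'' subfamily) are a detour into genuinely delicate territory: the disjoint-or-nested family you have built is essentially a Suslin tree under reverse inclusion, and extracting an uncountable pairwise-incomparable subfamily from it is precisely extracting an uncountable antichain, which a Suslin tree does not admit. Also, your claimed equivalence between ``never both $a_\beta,b_\beta\in(a_\alpha,b_\alpha)$'' and ``$c_\beta\notin(a_\alpha,b_\alpha)$ or $c_\alpha\notin(a_\beta,b_\beta)$'' is false for overlapping intervals; only the latter condition is what the computation uses.

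The fix is a one-line change, and it is what the paper does: at stage $\alpha$ the countable set of previously chosen \emph{interior points} $\{c_\beta:\beta<\alpha\}$ is not dense (nonseparability of $S$), so its complement contains a nonempty open interval, which by density and unboundedness of the order can be shrunk to a nonempty $(a_\alpha,b_\alpha)$ missing all the $c_\beta$; then pick $c_\alpha\in(a_\alpha,b_\alpha)$. This directly yields $c_\beta\notin(a_\alpha,b_\alpha)$ for all $\beta<\alpha$, which is all the disjointness argument requires; no thinning, no $\Delta$-system, and no condition on the endpoints is needed. Your opening paragraph about maximal disjoint families of intervals is likewise superfluous, since nonseparability is part of the definition of a Suslin line.
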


\begin{proof}
By transfinite induction, we shall take $a_\alpha, b_\alpha, c_\alpha \in S$ 
for $\alpha < \omega_1$ so that 
\begin{enumerate}
\item $a_\alpha < b_\alpha < c_\alpha$; 
\item $b_\beta \notin (a_\alpha, c_\alpha)$ for $\beta < \alpha$.  
\end{enumerate}
This can be carried over because for each $\alpha < \omega_1$, 
the set $\{b_\beta ~|~ \beta < \alpha\}$, being countable, is not dense in $S$.  
Setting $U_\alpha := (a_\alpha, b_\alpha) \times (b_\alpha, c_\alpha)$, 
we obtain an uncountable family $\{U_\alpha\}_{\alpha < \omega}$ of 
nonempty mutually disjoint open sets in $S \times S$.  
\end{proof}

\begin{cor}
$\neg \mathsf{SH}$ implies the existence of a unital commutative 
CCC C*-algebra $\mathcal{A}$ such that $\mathcal{A} \otimes \mathcal{A}$ does not have CCC.  
\end{cor}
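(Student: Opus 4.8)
The plan is to manufacture the algebra directly from a Suslin line by one-point compactification. Assume $\neg\mathsf{SH}$ and fix a Suslin line $S$. By the two preceding propositions, $S$ is locally compact Hausdorff and $S \times S$ does not have CCC; fix an uncountable family $\{U_\alpha\}_{\alpha < \omega_1}$ of nonempty mutually disjoint open subsets of $S \times S$. Let $\hat{S} = S \cup \{\infty\}$ be the one-point compactification of $S$, which is compact Hausdorff precisely because $S$ is locally compact Hausdorff, and set $\mathcal{A} := C(\hat{S})$, a unital commutative C*-algebra.

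The first thing to check is that $\hat{S}$ has CCC as a topological space. Given a family of nonempty mutually disjoint open subsets of $\hat{S}$, at most one member contains the point $\infty$; after discarding it, the remaining members are nonempty, mutually disjoint open subsets of $S$, hence countably many since $S$ has CCC. So $\hat{S}$ has CCC, and by Proposition \ref{prop:CCC_equiv_CCC} the algebra $\mathcal{A} = C(\hat{S})$ has CCC.

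It then remains to see that $\mathcal{A} \otimes \mathcal{A}$ fails CCC. Since $\mathcal{A}$ is commutative, hence nuclear, there is a unique C*-tensor product, and the canonical isomorphism $C(\hat{S}) \otimes C(\hat{S}) \cong C(\hat{S} \times \hat{S})$ identifies $\mathcal{A} \otimes \mathcal{A}$ with the function algebra of the compact Hausdorff space $\hat{S} \times \hat{S}$. Now $S$ is open in $\hat{S}$, so $S \times S$ is open in $\hat{S} \times \hat{S}$, and therefore each $U_\alpha$ is still open in $\hat{S} \times \hat{S}$; thus $\{U_\alpha\}_{\alpha < \omega_1}$ is an uncountable family of nonempty mutually disjoint open subsets of $\hat{S} \times \hat{S}$, which consequently does not have CCC. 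Applying Proposition \ref{prop:CCC_equiv_CCC} once more, $C(\hat{S} \times \hat{S})$ does not have CCC, and hence neither does $\mathcal{A} \otimes \mathcal{A}$.

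There is no genuine obstacle here, as all the substantive work lies in the preceding propositions on Suslin lines; the only points that need any care are that compactifying by a single point does not destroy CCC (immediate, as above) and the identification of $\mathcal{A} \otimes \mathcal{A}$ with $C(\hat{S} \times \hat{S})$ (classical, and already invoked in the introduction). One could instead try to use $C(K)$ for a suitable compact order interval $K$ of $S$, but then the product argument of the previous proposition would have to be re-run inside $K$ after ensuring $K$ is nonseparable, so the one-point compactification is the more economical route.
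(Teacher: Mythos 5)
Your proposal is correct and is essentially the paper's own argument: pass to the one-point compactification $S^+$ of a Suslin line, note it remains CCC while $S^+\times S^+$ contains the non-CCC open subspace $S\times S$, and transfer via $C(S^+)\otimes C(S^+)\cong C(S^+\times S^+)$ and Proposition \ref{prop:CCC_equiv_CCC}. The only cosmetic difference is that you verify CCC of $S^+$ by the ``at most one open set contains $\infty$'' argument, whereas the paper invokes density of $S$ in $S^+$; both are immediate.
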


\begin{proof}
Let $S$ be a Suslin line and consider the one point compactification $S^+$ of $S$.  
Since $S^+$ contains $S$ as a dense subspace, it is a CCC space.  
On the other hand, $S^+ \times S^+$ does not have CCC because it contains $S \times S$.  
Now $\mathcal{A} = C(S^+)$ is a unital commutative CCC C*-algebra, 
but $\mathcal{A} \otimes \mathcal{A}$, being isomorphic to $C(S^+ \times S^+)$, 
does not have CCC.  
\end{proof}

\section{Tensor products}\label{sec:tensor}
Here we shall prove Theorems \ref{thm:main1} and \ref{thm:main2}.  
For the first theorem, we need the following combinatorial lemma, 
which is known as the $\Delta$-system lemma.  
A proof can be found in any standard textbook on set theory in which the method of forcing is dealt with.  

\begin{lem}[$\Delta$-system lemma]
Every uncountable family of finite sets includes an uncountable subfamily 
whose pairwise intersection is constant.  
\end{lem}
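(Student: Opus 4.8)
The plan is to reduce to a uniform situation and then induct on the common cardinality of the members. Without loss of generality I would first replace the given family by a subfamily of cardinality exactly $\omega_1$; a $\Delta$-subsystem of the smaller family is automatically one of the original. Next, writing this $\omega_1$-sized family as the union over $n < \omega$ of its members of cardinality $n$, a counting argument (a countable union of countable sets is countable) produces some $n$ for which uncountably many members have cardinality exactly $n$. Passing to that subfamily, it therefore suffices to prove: every family $\mathcal{A}$ of $\omega_1$ finite sets, each of cardinality $n$, contains an uncountable subfamily with constant pairwise intersection. I would prove this by induction on $n$, the case $n = 0$ being trivial with empty root.

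For the inductive step I would split into two cases. \emph{Case 1: some point $x$ lies in uncountably many members of $\mathcal{A}$.} Restrict to the uncountable subfamily $\mathcal{A}_x$ of members containing $x$ and remove $x$ from each; this yields an uncountable family of sets of cardinality $n - 1$, to which the induction hypothesis applies, giving an uncountable $\Delta$-subsystem with some root $R$. Adjoining $x$ back to every set in it produces an uncountable $\Delta$-subsystem of $\mathcal{A}$ with root $R \cup \{x\}$. \emph{Case 2: every point lies in only countably many members of $\mathcal{A}$.} Here I would build, by transfinite recursion of length $\omega_1$, a pairwise disjoint subfamily $\{A_\alpha : \alpha < \omega_1\}$. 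Suppose $A_\beta$ has been chosen for all $\beta < \alpha$ with $\alpha < \omega_1$, the $A_\beta$ pairwise disjoint. Then $\bigcup_{\beta < \alpha} A_\beta$ is a countable set, and since each of its countably many points belongs to only countably many members of $\mathcal{A}$, at most countably many members of $\mathcal{A}$ meet $\bigcup_{\beta < \alpha} A_\beta$. As $\mathcal{A}$ is uncountable, there is a member $A_\alpha$ disjoint from all the $A_\beta$ with $\beta < \alpha$. The resulting family is an uncountable $\Delta$-subsystem of $\mathcal{A}$ with empty root.

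The only point that needs genuine care is the recursion in Case 2: one must check it never stalls at a countable stage, which is precisely where the hypotheses that $\mathcal{A}$ is uncountable and that each point sits in only countably many members are used together, via the fact (available in $\mathsf{ZFC}$) that a countable union of countable sets is countable. The dichotomy between the two cases is exhaustive by construction, so no further case is needed, and everything else is routine bookkeeping.
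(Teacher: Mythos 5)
Your proof is correct, and it is essentially the standard argument (reduce to a family of $\omega_1$ sets of a fixed finite cardinality $n$, then induct on $n$ with the dichotomy on whether some point lies in uncountably many members) found in Kunen's book, which is exactly the kind of reference the paper defers to here, since it gives no proof of its own. The only cosmetic remark is that if the members are taken to be distinct sets, the base case of the induction is better taken at $n=1$ (uncountably many distinct singletons are pairwise disjoint), since there is only one set of cardinality $0$; this does not affect the argument.
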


\begin{proof}[Proof of Theorem \ref{thm:main1}]
Let $\{\mathcal{A}_i\}_{i \in I}$ be a family of unital C*-algebras such that 
for every finite $J \subset I$, the minimal tensor product $\bigotimes_{i \in J} \mathcal{A}_i$ has CCC.  
We shall prove that $\mathcal{A}:= \bigotimes_{i \in I} \mathcal{A}_i$ also has CCC.  

Suppose that, contrary to our claim, there exists an uncountable family 
$\{\mathcal{I}_\lambda\}_{\lambda \in \Lambda}$ of nonzero mutually orthogonal ideals 
in $\mathcal{A}$.  By Proposition \ref{lem:union_lemma}, we can find a finite subset 
$J_\lambda \subset I$ for each $\lambda \in \Lambda$ such that 
$\mathcal{I}_\lambda \cap \bigotimes_{i \in J_\lambda} \mathcal{A}_i \neq 0$.  
By the $\Delta$-system lemma, we may assume that there exists a set $R$ such that 
$J_\lambda \cap J_\mu = R$ for any $\lambda \neq \mu$.  

Since the tensor products are minimal, 
$\mathcal{I}_\lambda \cap \bigodot_{i \in J_\lambda} \mathcal{A}_i$ 
is not zero for each $\lambda$, where $\bigodot_{i \in J_\lambda} \mathcal{A}_i$ 
is the algebraic tensor products of $\mathcal{A}_i$'s.  
Take nonzero $f_\lambda \in \mathcal{I}_\lambda \cap \bigodot_{i \in J_\lambda} \mathcal{A}_i$ 
for each $\lambda$.  If $R$ is empty, then $f_\lambda f_\mu \neq 0$ for $\lambda \neq \mu$, 
which contradicts with the assumption that $\mathcal{I}_\lambda$ and $\mathcal{I}_\mu$ are 
orthogonal to each other.  Therefore, $f_\lambda$ is of the form 
$\sum_k g_\lambda^k \otimes h_\lambda^k$, 
where $g_\lambda^k$ is in $\bigotimes_{i \in R} \mathcal{A}_i$ and 
$\{h_\lambda^k\}_k$ is a linearly independent set in 
$\bigotimes_{i \in J_\lambda \setminus R} \mathcal{A}_i$.  
If $\lambda \neq \mu$, then the equality $\mathcal{I}_\lambda \mathcal{I}_\mu = 0$ implies that 
$g_\lambda^k a g_\mu^l = 0$ for all $k, l$ and $a \in \bigotimes_{i \in R} \mathcal{A}_i$.  
Since for each $\lambda$ there exists $k$ with $g_\lambda^k \neq 0$, 
it follows that $\bigotimes_{i \in R} \mathcal{A}_i$ does not have CCC by 
Proposition \ref{prop:alg_con}, which is a contradiction.  
Therefore, $\bigotimes_{i \in I} \mathcal{A}_i$ has CCC.  
\end{proof}

\begin{cor}
Every minimal tensor product of unital separable C*-algebras has CCC.  
\end{cor}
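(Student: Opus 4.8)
The plan is to deduce this from Theorem~\ref{thm:main1} by verifying its hypothesis: every \emph{finite} minimal tensor product of unital separable C*-algebras has CCC. First I would recall that a minimal tensor product of finitely many separable C*-algebras is again separable: if $\{\mathcal{A}_i\}_{i \in J}$ is a finite family with each $\mathcal{A}_i$ separable, pick a countable dense subset $D_i \subset \mathcal{A}_i$ for each $i$; then the set of finite sums of elementary tensors $\bigotimes_{i \in J} d_i$ with $d_i \in D_i$, taken with rational (or rational-complex) coefficients, is a countable set whose closed linear span contains the algebraic tensor product $\bigodot_{i \in J} \mathcal{A}_i$, which in turn is dense in $\bigotimes_{i \in J} \mathcal{A}_i$. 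Hence $\bigotimes_{i \in J} \mathcal{A}_i$ is separable.

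Next I would invoke the proposition proved earlier in the excerpt that every separable C*-algebra has CCC. Applying it to each finite tensor product $\bigotimes_{i \in J} \mathcal{A}_i$, we conclude that the hypothesis of Theorem~\ref{thm:main1} is satisfied. Theorem~\ref{thm:main1} then yields that the full minimal tensor product $\bigotimes_{i \in I} \mathcal{A}_i$ has CCC, as desired.

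I do not anticipate a serious obstacle here; the only point requiring a little care is the first step, where one must be sure that the countable set of ``rational elementary tensors'' really is dense---this uses both that the algebraic tensor product is dense in the minimal tensor product by definition of the latter, and that on the algebraic tensor product the norm is (in particular) continuous in each tensor factor, so that approximating each $d_i$ in $\mathcal{A}_i$ and approximating coefficients suffices. Everything else is a direct appeal to results already established in the paper.
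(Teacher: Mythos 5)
Your proposal is correct and is exactly the intended argument: the paper states this corollary without proof as an immediate consequence of Theorem~\ref{thm:main1} together with the earlier proposition that separable C*-algebras have CCC, using the standard fact that a finite minimal tensor product of separable C*-algebras is separable. No issues.
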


Next, we shall prove the second theorem.  
For this, we use the following lemma.  

\begin{lem}\label{lem:FIP}
Suppose that $\mathcal{A}$ is a CCC C*-algebra and 
$\{\mathcal{I}_\alpha\}_{\alpha < \omega_1}$ be a family of its ideals.  
Then $\mathsf{MA}(\omega_1)$ implies that there exists an uncountable subfamily of 
the ideals which has the finite intersection property.  
\end{lem}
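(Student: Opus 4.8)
The statement is a fairly typical application of Martin's Axiom: we want to produce an uncountable subfamily of $\{\mathcal{I}_\alpha\}_{\alpha<\omega_1}$ any finitely many of whose members have nonzero intersection. The natural strategy is to set up a forcing poset $P$ whose conditions approximate a ``large'' index set together with witnesses that the corresponding ideals intersect, apply $\mathsf{MA}(\omega_1)$ to a well-chosen family of $\omega_1$ dense sets, and read off the desired subfamily from the generic filter. The one thing we must be careful about is that $\mathsf{MA}(\omega_1)$ only applies to posets that themselves satisfy CCC, and verifying CCC of $P$ is where the hypothesis that $\mathcal{A}$ is a CCC C*-algebra gets used.

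**Setting up the poset.** I would let $P$ consist of pairs $p=(F_p,a_p)$ where $F_p\subset\omega_1$ is finite and $a_p$ is a nonzero positive element of $\mathcal{A}$ lying in $\bigcap_{\alpha\in F_p}\mathcal{I}_\alpha$ (with the empty intersection interpreted as $\mathcal{A}$, so $(\emptyset,1)\in P$, making $P$ nonempty); order $P$ by $q\le p$ iff $F_q\supseteq F_p$, $a_q\in\overline{a_p\mathcal{A}a_p}$ (equivalently $a_q$ belongs to the hereditary subalgebra generated by $a_p$), and $\|a_q\|=1$ say, to normalize. Intuitively a condition commits to including the indices in $F_p$ in our final subfamily and carries a positive element inside the common intersection so far. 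For each $\beta<\omega_1$ the set $D_\beta=\{p\in P:\beta\in F_p \text{ or } p\text{ has no extension adding }\beta\}$ is dense; more usefully, the sets $E_\beta=\{p\in P: \sup F_p>\beta\}$ are dense provided every uncountable-index situation can be continued, and meeting all $\omega_1$ of these forces the generic to touch cofinally many indices. Applying $\mathsf{MA}(\omega_1)$ to $\{E_\beta\}_{\beta<\omega_1}$ yields a filter $G$ meeting each $E_\beta$; then $\Lambda:=\bigcup_{p\in G}F_p$ is uncountable, and for any finite $\{\alpha_1,\dots,\alpha_n\}\subset\Lambda$ we can find $p\in G$ with all $\alpha_j\in F_p$ (using that $G$ is a filter and $F_p$ grows), so $0\neq a_p\in\bigcap_j\mathcal{I}_{\alpha_j}$, i.e. $\bigcap_j\mathcal{I}_{\alpha_j}\neq 0$. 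That is exactly the finite intersection property for $\{\mathcal{I}_\alpha\}_{\alpha\in\Lambda}$.

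**The main obstacle: CCC of $P$.** The crux is showing $P$ has no uncountable family of pairwise incompatible conditions, and this is where I would invoke the hypothesis on $\mathcal{A}$. The point is that two conditions $p,q$ are compatible as soon as $F_p\cup F_q$ can be ``served'' by a common nonzero positive element dominated (in the hereditary sense) by both $a_p$ and $a_q$; a natural sufficient condition is $\overline{a_p\mathcal{A}a_q}\neq 0$, equivalently $a_p\mathcal{A}a_q\neq 0$. So if $\{p_\lambda\}_{\lambda<\omega_1}$ were pairwise incompatible, then in particular $a_{p_\lambda}\mathcal{A}a_{p_\mu}=0$ for $\lambda\neq\mu$ (modulo checking that failure of this orthogonality lets one build a common extension — one needs that the relevant ideals are still jointly met, which is where the finiteness of the $F$'s and a small argument are needed, perhaps first thinning by a $\Delta$-system so that $F_{p_\lambda}\cap F_{p_\mu}$ is a fixed root $R$ and handling the contribution on $R$). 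But an uncountable family $\{a_\lambda\}$ with $a_\lambda\mathcal{A}a_\mu=0$ for $\lambda\neq\mu$ contradicts Proposition~\ref{prop:alg_con}(i), since $\mathcal{A}$ has CCC. Thus $P$ is CCC and $\mathsf{MA}(\omega_1)$ applies.

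**Summary and expected difficulty.** To summarize the order of steps: (1) define $P$ and its order and check $P\neq\emptyset$; (2) prove $P$ is CCC using Proposition~\ref{prop:alg_con}(i) — the genuinely delicate step, requiring care about exactly when two conditions are compatible and possibly a preliminary $\Delta$-system reduction to isolate the shared ``root'' of the index sets; (3) identify the $\omega_1$ dense sets $E_\beta$ and verify density; (4) apply $\mathsf{MA}(\omega_1)$ and extract $\Lambda=\bigcup_{p\in G}F_p$; (5) verify $\Lambda$ is uncountable and that the associated subfamily of ideals has the finite intersection property, using that $G$ is a filter so finitely many conditions in $G$ have a common lower bound carrying a single nonzero positive element. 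I expect step (2) to be the real work: getting the definition of the order right so that incompatibility really does force the orthogonality relation $a_p\mathcal{A}a_q=0$ — not merely $a_p a_q=0$ — is the subtle point, and the hereditary-subalgebra formulation of the order is chosen precisely to make that implication go through.
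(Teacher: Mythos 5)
Your overall architecture (a poset of finite approximations, $\omega_1$ dense sets, $\mathsf{MA}(\omega_1)$, extraction of the subfamily from the filter) matches the spirit of the paper's proof, and your step (5) is fine; but there are two genuine gaps. The first concerns CCC of $P$. With the order you chose --- $a_q\in\overline{a_p\mathcal{A}a_p}$ --- incompatibility of $p$ and $q$ does \emph{not} force $a_p\mathcal{A}a_q=0$, so Proposition~\ref{prop:alg_con}(i) cannot be invoked, and in fact $P$ can fail CCC even when $\mathcal{A}$ has CCC: in $\mathcal{B}(H)$ (which has only three ideals, hence CCC) the conditions $(\varnothing,p_t)$ for uncountably many distinct rank-one projections $p_t$ are pairwise incompatible, since the hereditary subalgebra generated by $p_t$ is $\mathbb{C}p_t$ and $\mathbb{C}p_s\cap\mathbb{C}p_t=0$, even though $p_s\mathcal{B}(H)p_t\neq0$. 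This is repairable: order instead by the generated \emph{ideals} (require $a_q\in\overline{\mathcal{A}a_p\mathcal{A}}$). Then incompatibility of $p,q$ is genuinely equivalent to $\overline{\mathcal{A}a_p\mathcal{A}}\cap\overline{\mathcal{A}a_q\mathcal{A}}=0$, i.e.\ $a_p\mathcal{A}a_q=0$ (orthogonality forced by the index sets is subsumed, since $\overline{\mathcal{A}a_p\mathcal{A}}\subset\bigcap_{\alpha\in F_p}\mathcal{I}_\alpha$), and CCC of $P$ follows; no $\Delta$-system argument is needed. At that point your conditions are essentially just nonzero ideals contained in finite intersections of the $\mathcal{I}_\alpha$, which is what the paper's poset is.

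The second gap is the real one: the density of $E_\beta=\{p:\sup F_p>\beta\}$, which you only assert ``provided every uncountable-index situation can be continued,'' is false in general. Take $\mathcal{A}=C[0,1]\oplus\mathbb{C}$, $\mathcal{I}_0=0\oplus\mathbb{C}$ and $\mathcal{I}_\gamma=C[0,1]\oplus0$ for $\gamma\geq1$: the condition $p=(\{0\},a)$ with $a=(0,1)$ has no extension meeting any index $\gamma\geq1$, so $E_1$ is not dense (your fallback set $D_\beta$ is dense but meeting it yields nothing, as the generic may always take the ``no extension'' branch). The missing idea is the first half of the paper's proof: form the closed tail ideals $\mathcal{J}_\beta$ generated by $\{\mathcal{I}_\gamma\}_{\gamma\geq\beta}$ and use CCC of $\mathcal{A}$ once more to find $\alpha_0$ such that $\mathcal{J}_\beta$ is essential in $\mathcal{J}_{\alpha_0}$ for all $\beta>\alpha_0$ (otherwise one extracts an uncountable family of mutually orthogonal ideals $\mathcal{K}_{\beta_\delta}$). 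Restricting the poset to nonzero ideals of $\mathcal{J}_{\alpha_0}$, essentiality together with Lemma~\ref{lem:union_lemma} applied to the finite subsums $\sum_{\gamma\in S}\mathcal{I}_\gamma$ (which are closed) is exactly what makes the dense sets dense. Without this reduction $\mathsf{MA}(\omega_1)$ gives nothing, so the proof as proposed does not go through.
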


\begin{proof}
Set $\mathcal{J}_\alpha := \overline{\sum_{\gamma < \alpha} \mathcal{I}_\gamma}$.  
Then $\mathcal{J}_\alpha$ is a transfinite decreasing sequence of ideals of $\mathcal{A}$.  
We shall first show that there exists $\alpha_0$ such that 
$\mathcal{J}_\beta$ is an essential ideal of $\mathcal{J}_{\alpha_0}$ for all $\beta > \alpha_0$.  
Suppose the contrary. Then we can find an transfinite increasing sequence 
$\{\beta_\delta\}_{\delta < \omega_1} \subset \omega_1$ such that the inclusion 
$\mathcal{J}_{\beta_{\delta+1}} \subset \mathcal{J}_{\beta_\delta}$ is not essential.  
In other words, there exists a nonzero ideal $\mathcal{K}_{\beta_\delta}$ of 
$\mathcal{J}_{\beta_\delta}$ such that 
$\mathcal{K}_{\beta_\delta} \cap \mathcal{J}_{\beta_{\delta+1}} = 0$.  
Now $\{\mathcal{K}_{\beta_\delta}\}_{\delta < \omega_1}$ is an uncountable family of 
mutually orthogonal ideals in $\mathcal{A}$, which is a contradiction.  

Next, let $P$ be the set of nonzero ideals in $\mathcal{J}_{\alpha_0}$.  
Then $P$ has CCC as a partially ordered set, because an ideal of a CCC C*-algebra has CCC.  
For each $\beta > \alpha_0$, we set 
\[
D_\beta = \{ p \in P ~|~ p \subset \mathcal{I}_\gamma \text{ for some } \gamma \geq \beta \}  
\]
and claim that this is dense in $P$.  
To prove this, take an arbitrary $q \in P$.  
Then $q' := q \cap \mathcal{J}_\beta$ is not zero by the definition of $\alpha_0$.  
Here, $\mathcal{J}_\beta$ is approximated by 
$\sum_{\gamma \in S} \mathcal{I}_\gamma$, where $S \subset ]\,\beta, \omega_1 [$ is finite.  
By \cite[Theorem3.1.7]{Murphy}, $\sum_{\gamma \in S} \mathcal{I}_\gamma$ is 
norm closed for each $S$, whence we can use Lemma \ref{lem:union_lemma} to conclude that 
$q'$ is the inductive limit of $\{q \cap \sum_{\gamma \in S} \mathcal{I}_\gamma\}_S$, 
and so there exists $\gamma > \beta$ with $q \cap \mathcal{I}_\gamma \neq 0$.  
Since $q \cap \mathcal{I}_\gamma$ is clearly in $D_\beta$, 
it follows that $D_\beta$ is dense, as desired.  

Now let $F$ be a filter on $P$ such that $F \cap D_\beta$ is not empty for all $\beta$, 
whose existence is guaranteed by $\mathsf{MA}(\omega_1)$.  
Then $\{ \mathcal{I}_\alpha ~|~ \exists p \in F, \ p \subset \mathcal{I}_\alpha \}$ 
has the finite intersection property, and this is uncountable because 
the condition $F \cap D_\beta \neq \varnothing$ for each $\beta$ implies that 
the set of all $\alpha$ such that $\mathcal{I}_\alpha \supset p$ for some $p \in F$ is unbounded 
in $\omega_1$.  This completes the proof .  
\end{proof}

\begin{proof}[Proof of Theorem \ref{thm:main2}]
By Theorem \ref{thm:main1}, it suffices to show that 
if $\mathcal{A}$ and $\mathcal{B}$ have CCC, then $\mathcal{A} \otimes \mathcal{B}$ has CCC.  
Assume that, on the contrary, there exists a family $\{\mathcal{I}_\alpha\}_{\alpha < \omega_1}$ 
of nonzero mutually orthogonal ideals in $\mathcal{A} \otimes \mathcal{B}$.  
Then there exist nonzero ideals $\mathcal{J}_\alpha \subset \mathcal{A}$ and 
$\mathcal{K}_\alpha \subset \mathcal{B}$ with
$\mathcal{J}_\alpha \odot \mathcal{K}_\alpha \subset \mathcal{I}_\alpha$, 
by \cite[Lemma 2.12 (ii)]{Blanchard-Kirchberg}.  
Here, by the preceding lemma, we may assume that 
$\{\mathcal{J}_\alpha\}_\alpha$ and $\{\mathcal{K}_\alpha\}_\alpha$ satisfy 
the finite intersection property.  
Then, $\mathcal{I}_\alpha \cap \mathcal{I}_\beta$ contains 
$(\mathcal{J}_\alpha \cap \mathcal{J}_\beta) \otimes 
(\mathcal{K}_\alpha \cap \mathcal{K}_\beta) \neq 0$, which is a contradiction.  
Therefore, $\mathcal{A} \otimes \mathcal{B}$ has CCC, as expected.  
\end{proof}

\section{Concluding remarks and problems}
Let $\mathcal{A}$ be a C*-algebra.  
By $\rm{Prim}(\mathcal{A})$, 
we shall denote the primitive spectrum of $\mathcal{A}$.  
(For the definition and elementary properties of primitive spectra, see \cite[Chapter 4]{Pedersen}.)  
It can be easily verified that $\mathcal{A}$ has CCC if and only if 
$\rm{Prim}(\mathcal{A})$ has CCC as a topological space, 
and Lemma \ref{lem:FIP} is obtained as a corollary of \cite[Lemma II.2.23]{Kunen}.  
Here, we may replace $\rm{Prim}(\mathcal{A})$ by the prime spectrum $\rm{prime}(\mathcal{A})$, 
because the topologies of these spaces are isomorphic as partially ordered sets.  

In \cite{Wulfsohn}, it is proved that $\rm{Prim}(\mathcal{A} \otimes \mathcal{B})$ is
homeomorphic to $\rm{Prim}(\mathcal{A}) \times \rm{Prim}(\mathcal{B})$ 
provided that either $\mathcal{A}$ or $\mathcal{B}$ is type I.  
Also, in \cite[Proposition 2.17]{Blanchard-Kirchberg}, 
one can find various conditions for $\rm{prime}(\mathcal{A} \otimes \mathcal{B})$ to be 
homeomorphic to $\rm{prime}(\mathcal{A}) \times \rm{prime}(\mathcal{B})$.  
In these cases, Theorem \ref{thm:main2} follows from the corresponding fact for topological spaces 
\cite[Theorem II.2.24]{Kunen}.  


One problem is whether Theorem \ref{thm:main1} and Theorem \ref{thm:main2} can be 
generalized to non-minimal tensor products.  
Since any tensor product has the minimal tensor product as its quotient, 
it depends on whether the kernel of the quotient map, 
which is difficult to be investigated, has CCC.  

Another problem lies in the definition of CCC.  
In this paper we have defined CCC in terms of ideals, 
whence this condition is trivial for simple C*-algebras.  
In order to avoid this phenomenon, 
we can use hereditary C*-algebras in place of ideals: 
we shall say two hereditary C*-subalgebras $\mathcal{A}$ and $\mathcal{B}$ 
are orthogonal to each other if $\overline{\mathcal{A}\mathcal{B}} = 0$; 
a C*-algebra has \emph{strong} CCC 
if there is no uncountable family of nonzero mutually orthogonal hereditary C*-subalgebras.  
Then we can prove the following in the same way as in section \ref{sec:dfn}.  
\begin{itemize}
\item Strong CCC implies CCC.  
\item C*-subalgebras of a strong CCC C*-algebra have strong CCC.  
\item An extension of a strong CCC C*-algebra by a strong CCC C*-algebra has strong CCC.  
\item A von Neumann algebra has strong CCC if and only if it is $\sigma$-finite, 
so tensor products of two strong CCC von Neumann algebras have strong CCC.  
\end{itemize}
It is expected that conclusions similar to the main theorems of this paper are true, 
but the author could not prove this.  

\bigskip
{\bf Acknowledgement.} 
The author gratitudes to Professor Yasuyuki Kawahigashi, who is my adviser, 
for several helpful comments on this paper.  
The author also expresses his thanks to Professor Ilijas Farah, 
Professor Eberhard Kirchberg, Professor George Elliott, Professor Takeshi Katsura, 
Alessandro Vignati, Yuki Arano and Yosuke Kubota for many stimulating conversations.  
This work was supported by the Program for Leading Graduate Schools, MEXT, Japan.

\end{document}